\newcommand{\Z}{\mathbb{Z}}
\newcommand{\R}{\mathbb{R}}
\newcommand{\GL}{\hbox{\rm GL}}
\newcommand{\tildeA}{\tilde{A}} 
\newcommand{\tildeB}{\tilde{B}} 
\newcommand{\tildeC}{\tilde{C}} 
\newcommand{\tildeD}{\tilde{D}} 
\let\cong\equiv
\newcommand{\sset}{\subseteq}
\newcommand{\generatedby}[1]{\langle #1\rangle}
\newcommand{\biarrow}{\leftrightarrow}
\newcommand{\homotopic}{\simeq}
\newcommand{\semidirect}{\rtimes}
\newcommand{\WOmega}{W_{\Omega}}
\newcommand{\chamber}{K}
\newcommand{\COmega}{\chamber_{\Omega}}
\newcommand{\GOmega}{\Gamma_\Omega}
\newcommand{\DOmega}{\Delta_\Omega}
\newcommand{\Dodd}{\Delta^{\rm odd}} 
\newcommand{\tildeDodd}{\tilde{\Delta}^{\rm odd}} 
\newcommand{\halfV}{\frac{1}{2}V}  
\newtheorem{theorem}{Theorem}
\newtheorem{lemma}[theorem]{Lemma}
\newtheorem{corollary}[theorem]{Corollary}
\newtheorem{proposition}[theorem]{Proposition}
\theoremstyle{remark}
\newtheorem*{remarks}{Remarks}
\newcommand{\tablescaleunit}{.008cm}
\newcommand{\tablescalenoderadius}{8}
\newcommand{\tablescalesinglewidth}{5}
\newcommand{\bigscaleunit}{.012cm}
\newcommand{\bigscalenoderadius}{8}
\newcommand{\bigscalesinglewidth}{5}
\newcommand{\Escaleunit}{.006cm}
\newcommand{\Escalenoderadius}{8}
\newcommand{\Escalesinglewidth}{5}
\newcommand{\solid}[2]{\qdisk(#1,#2){\noderadius}}
\newcommand{\single}[4]{\psline[linewidth=\singlewidth](#1,#2)(#3,#4)}
\newcommand{\dashed}[4]{\psline[linewidth=\singlewidth,linestyle=dashed,dash=10 10](#1,#2)(#3,#4)}
\newcommand{\double}[4]{%
\triplewidth=\singlewidth
\multiply\triplewidth by 3
\psline[linewidth=\triplewidth](#1,#2)(#3,#4)
\psline[linewidth=\singlewidth,linecolor=white](#1,#2)(#3,#4)}
\newcommand{\arrowprep}{\psset{linewidth=20,linecolor=lightgray,linearc=95,arrowsize=0pt 1.7,arrowlength=.6,arrowinset=0.1}}
\newcommand{\NoOpToAvoidShiftingProblem}{\rput[c](\Tx,\Ty){}}
\newcommand{\doarrow}[9]{
\Ix=#3
\multiply\Ix by #8
\divide\Ix by 100
\advance\Ix by #1
\Iy=#4
\multiply\Iy by #8
\divide\Iy by 100
\advance\Iy by #2
\Cx=#3
\advance\Cx by #1
\Cy=#4
\advance\Cy by #2
\Fx=#5
\multiply\Fx by -#9
\divide\Fx by 100
\advance\Fx by #5
\advance\Fx by\Cx
\Fy=#6
\multiply\Fy by -#9
\divide\Fy by 100
\advance\Fy by #6
\advance\Fy by\Cy
\psline{#7}(\Ix,\Iy)(\Cx,\Cy)(\Fx,\Fy)
\Tx=#1
\Ty=#2
}
\newcommand{\OutsideWidth}{20}
\newcommand{\InsideWidth}{18}
\newcommand{\PercentAtBeginSingleEnded}{12}
\newcommand{\PercentAtEndSingleEnded}{8}
\newcommand{\PercentAtEndsDoubleEnded}{8}
\newcommand{\PercentAtEndsThirds}{20}
\newcommand{\PercentAtEndsStraight}{15}
\newcommand{\PercentAtBeginStraight}{30}
\newcommand{\PercentAtEndStraight}{25}
\newcommand{\InsidePercentAtBeginSingleEnded}{13}%
\newcommand{\InsidePercentAtEndSingleEnded}{9} %
\newcommand{\InsidePercentAtEndsDoubleEnded}{9}  %
\newcommand{\InsidePercentAtEndsThirds}{21} %
\newcommand{\InsidePercentAtEndsStraight}{17}
\newcommand{\InsidePercentAtBeginStraight}{31}
\newcommand{\InsidePercentAtEndStraight}{28}
\newcommand{\ccwNE}[3]{
\tempX=#1
\advance\tempX by -100
\arrowprep
\psset{linecolor=black,linewidth=\OutsideWidth}
\doarrow{\tempX}{#2}{100}{-100}{100}{100}{->}{\PercentAtBeginSingleEnded}{\PercentAtEndSingleEnded}
\NoOpToAvoidShiftingProblem
\psset{linecolor=lightgray,linewidth=\InsideWidth}
\doarrow{\tempX}{#2}{100}{-100}{100}{100}{->}{\InsidePercentAtBeginSingleEnded}{\InsidePercentAtEndSingleEnded}
\advance\Tx by 100
\advance\Ty by -69
\rput[B](\Tx,\Ty){#3}
}
\newcommand{\ccwSW}[3]{
\tempX=#1
\advance\tempX by 100
\arrowprep
\psset{linecolor=black,linewidth=\OutsideWidth}
\doarrow{\tempX}{#2}{-100}{100}{-100}{-100}{->}{\PercentAtBeginSingleEnded}{\PercentAtEndSingleEnded}
\NoOpToAvoidShiftingProblem
\psset{linecolor=lightgray,linewidth=\InsideWidth}
\doarrow{\tempX}{#2}{-100}{100}{-100}{-100}{->}{\InsidePercentAtBeginSingleEnded}{\InsidePercentAtEndSingleEnded}
\advance\Tx by -100
\advance\Ty by 52
\rput[B](\Tx,\Ty){#3}
}
\newcommand{\cwSE}[3]{
\tempX=#1
\advance\tempX by -100
\arrowprep
\psset{linecolor=black,linewidth=\OutsideWidth}
\doarrow{\tempX}{#2}{100}{100}{100}{-100}{->}{\PercentAtBeginSingleEnded}{\PercentAtEndSingleEnded}
\NoOpToAvoidShiftingProblem
\psset{linecolor=lightgray,linewidth=\InsideWidth}
\doarrow{\tempX}{#2}{100}{100}{100}{-100}{->}{\InsidePercentAtBeginSingleEnded}{\InsidePercentAtEndSingleEnded}
\advance\Tx by 100
\advance\Ty by 52
\rput[B](\Tx,\Ty){#3}
}
\newcommand{\cwNW}[3]{
\tempX=#1
\advance\tempX by 100
\arrowprep
\psset{linecolor=black,linewidth=\OutsideWidth}
\doarrow{\tempX}{#2}{-100}{-100}{-100}{100}{->}{\PercentAtBeginSingleEnded}{\PercentAtEndSingleEnded}
\NoOpToAvoidShiftingProblem
\psset{linecolor=lightgray,linewidth=\InsideWidth}
\doarrow{\tempX}{#2}{-100}{-100}{-100}{100}{->}{\InsidePercentAtBeginSingleEnded}{\InsidePercentAtEndSingleEnded}
\advance\Tx by -100
\advance\Ty by -69
\rput[B](\Tx,\Ty){#3}
}
\newcommand{\dbldown}[3]{
\tempX=#1
\advance\tempX by 100
\arrowprep
\psset{linecolor=black,linewidth=\OutsideWidth}
\doarrow{\tempX}{#2}{-100}{100}{-100}{-100}{<->}{\PercentAtEndsDoubleEnded}{\PercentAtEndsDoubleEnded}
\NoOpToAvoidShiftingProblem
\psset{linecolor=lightgray,linewidth=\InsideWidth}
\doarrow{\tempX}{#2}{-100}{100}{-100}{-100}{<->}{\InsidePercentAtEndsDoubleEnded}{\InsidePercentAtEndsDoubleEnded}
\NoOpToAvoidShiftingProblem
\advance\Tx by -100
\advance\Ty by 52
\rput[B](\Tx,\Ty){#3}
}
\newcommand{\NWSE}[3]{
\arrowprep
\psset{linecolor=black,linewidth=\OutsideWidth}
\doarrow{#1}{#2}{50}{-50}{50}{-50}{<->}{\PercentAtEndsStraight}{\PercentAtEndsStraight}
\NoOpToAvoidShiftingProblem
\psset{linecolor=lightgray,linewidth=\InsideWidth}
\doarrow{#1}{#2}{50}{-50}{50}{-50}{<->}{\InsidePercentAtEndsStraight}{\InsidePercentAtEndsStraight}
\advance\Tx by 50
\advance\Ty by -50
\rput[c](\Tx,\Ty){#3}
}
\newcommand{\SWNE}[3]{
\arrowprep
\psset{linecolor=black,linewidth=\OutsideWidth}
\doarrow{#1}{#2}{50}{50}{50}{50}{<->}{\PercentAtEndsStraight}{\PercentAtEndsStraight}
\NoOpToAvoidShiftingProblem
\psset{linecolor=lightgray,linewidth=\InsideWidth}
\doarrow{#1}{#2}{50}{50}{50}{50}{<->}{\InsidePercentAtEndsStraight}{\InsidePercentAtEndsStraight}
\advance\Tx by 50
\advance\Ty by 50
\rput[c](\Tx,\Ty){#3}
}
\newcommand{\URthird}[3]{
\tempX=#1
\advance\tempX by -50
\tempY=#2
\advance\tempY by 87
\arrowprep
\psset{linecolor=black,linewidth=\OutsideWidth}
\doarrow{\tempX}{\tempY}{100}{0}{50}{-87}{<->}{\PercentAtEndsThirds}{\PercentAtEndsThirds}
\NoOpToAvoidShiftingProblem
\psset{linecolor=lightgray,linewidth=\InsideWidth}
\doarrow{\tempX}{\tempY}{100}{0}{50}{-87}{<->}{\InsidePercentAtEndsThirds}{\InsidePercentAtEndsThirds}
\advance\Tx by 70
\advance\Ty by -13
\rput[Bl](\Tx,\Ty){#3}
}
\newcommand{\LRthird}[3]{
\tempX=#1
\advance\tempX by -50
\tempY=#2
\advance\tempY by -87
\arrowprep
\psset{linecolor=black,linewidth=\OutsideWidth}
\doarrow{\tempX}{\tempY}{100}{0}{50}{87}{<->}{\PercentAtEndsThirds}{\PercentAtEndsThirds}
\NoOpToAvoidShiftingProblem
\psset{linecolor=lightgray,linewidth=\InsideWidth}
\doarrow{\tempX}{\tempY}{100}{0}{50}{87}{<->}{\InsidePercentAtEndsThirds}{\InsidePercentAtEndsThirds}
\advance\Tx by 70
\advance\Ty by 0
\rput[Bl](\Tx,\Ty){#3}
}
\newcommand{\Lthird}[3]{
\tempX=#1
\advance\tempX by -50
\tempY=#2
\advance\tempY by -87
\arrowprep
\psset{linecolor=black,linewidth=\OutsideWidth}
\doarrow{\tempX}{\tempY}{-50}{87}{50}{87}{<->}{\PercentAtEndsThirds}{\PercentAtEndsThirds}
\NoOpToAvoidShiftingProblem
\psset{linecolor=lightgray,linewidth=\InsideWidth}
\doarrow{\tempX}{\tempY}{-50}{87}{50}{87}{<->}{\InsidePercentAtEndsThirds}{\InsidePercentAtEndsThirds}
\advance\Tx by -44
\advance\Ty by 87
\rput[l](\Tx,\Ty){#3}
}
\newcommand{\ULthird}[3]{
\tempX=#1
\advance\tempX by 50
\tempY=#2
\advance\tempY by 87
\arrowprep
\psset{linecolor=black,linewidth=\OutsideWidth}
\doarrow{\tempX}{\tempY}{-100}{0}{-50}{-87}{<->}{\PercentAtEndsThirds}{\PercentAtEndsThirds}
\NoOpToAvoidShiftingProblem
\psset{linecolor=lightgray,linewidth=\InsideWidth}
\doarrow{\tempX}{\tempY}{-100}{0}{-50}{-87}{<->}{\InsidePercentAtEndsThirds}{\InsidePercentAtEndsThirds}
\advance\Tx by -70
\advance\Ty by -13
\rput[Br](\Tx,\Ty){#3}
}
\newcommand{\LLthird}[3]{
\tempX=#1
\advance\tempX by 50
\tempY=#2
\advance\tempY by -87
\arrowprep
\psset{linecolor=black,linewidth=\OutsideWidth}
\doarrow{\tempX}{\tempY}{-100}{0}{-50}{87}{<->}{\PercentAtEndsThirds}{\PercentAtEndsThirds}
\NoOpToAvoidShiftingProblem
\psset{linecolor=lightgray,linewidth=\InsideWidth}
\doarrow{\tempX}{\tempY}{-100}{0}{-50}{87}{<->}{\InsidePercentAtEndsThirds}{\InsidePercentAtEndsThirds}
\advance\Tx by -70
\advance\Ty by 0
\rput[Br](\Tx,\Ty){#3}
}
\newcommand{\Rthird}[3]{
\tempX=#1
\advance\tempX by 50
\tempY=#2
\advance\tempY by 87
\arrowprep
\psset{linecolor=black,linewidth=\OutsideWidth}
\doarrow{\tempX}{\tempY}{50}{-87}{-50}{-87}{<->}{\PercentAtEndsThirds}{\PercentAtEndsThirds}
\NoOpToAvoidShiftingProblem
\psset{linecolor=lightgray,linewidth=\InsideWidth}
\doarrow{\tempX}{\tempY}{50}{-87}{-50}{-87}{<->}{\InsidePercentAtEndsThirds}{\InsidePercentAtEndsThirds}
\advance\Tx by 44
\advance\Ty by -87
\rput[r](\Tx,\Ty){#3}
}
\newcommand{\arrowSW}[2]{
\arrowprep
\psset{linecolor=black,linewidth=\OutsideWidth}
\doarrow{#1}{#2}{-25}{-43}{-25}{-44}{->}{\PercentAtBeginStraight}{\PercentAtEndStraight}
\NoOpToAvoidShiftingProblem
\psset{linecolor=lightgray,linewidth=\InsideWidth}
\doarrow{#1}{#2}{-25}{-43}{-25}{-44}{->}{\InsidePercentAtBeginStraight}{\InsidePercentAtEndStraight}
\NoOpToAvoidShiftingProblem
}
\newcommand{\arrowW}[2]{
\arrowprep
\psset{linecolor=black,linewidth=\OutsideWidth}
\doarrow{#1}{#2}{-50}{0}{-50}{0}{->}{\PercentAtBeginStraight}{\PercentAtEndStraight}
\NoOpToAvoidShiftingProblem
\psset{linecolor=lightgray,linewidth=\InsideWidth}
\doarrow{#1}{#2}{-50}{0}{-50}{0}{->}{32}{27}
\NoOpToAvoidShiftingProblem
}
\newcommand{\arrowNE}[2]{
\arrowprep
\psset{linecolor=black,linewidth=\OutsideWidth}
\doarrow{#1}{#2}{25}{43}{25}{44}{->}{\PercentAtBeginStraight}{\PercentAtEndStraight}
\NoOpToAvoidShiftingProblem
\psset{linecolor=lightgray,linewidth=\InsideWidth}
\doarrow{#1}{#2}{25}{43}{25}{44}{->}{\InsidePercentAtBeginStraight}{\InsidePercentAtEndStraight}
\NoOpToAvoidShiftingProblem
}
\begin{document}
\title{Reflection centralizers in Coxeter groups}
\author{Daniel Allcock}
\thanks{Partly supported by NSF grants DMS-0600112 and DMS-1101566.}
\address{Department of Mathematics\\University of Texas at
  Austin\\Austin, TX 78713}
\email{allcock@math.utexas.edu}
\urladdr{http://www.math.utexas.edu/\textasciitilde allcock}
\subjclass[2000]{20F55}
\date{June 27, 2013}

\begin{abstract}
We refine Brink's theorem, that the non-reflection part of a reflection
centralizer in a Coxeter group $W$ is a free group.  We give an
explicit set of generators for the centralizer, which is finitely generated when $W$ is.  And we give a method for computing the Coxeter
diagram for its reflection subgroup.  In many cases, our method allows one
to compute centralizers in one's head.
\end{abstract}
\maketitle

\noindent
Brink proved the elegant result that the centralizer of a reflection
in a Coxeter group is the semidirect product of a Coxeter group by a
free group \cite{Brink}.  In fact this free group is the fundamental
group of the component of the ``odd Coxeter diagram'' distinguished by
the conjugacy class of the reflection.  Alekseevski, Michor and
Neretin \cite{AMN} independently gave another approach to reflection
centralizers.  We will give several refinements to both papers.

The first refinement is an explicit finite set of generators for the
reflection centralizer; Brink only gave explicit generators for the
free part.  This generating set plays a key role in the author's work
\cite{Allcock-Steinberg-groups} on Steinberg and Kac-Moody groups.  

The second refinement is a method of computing the Coxeter diagram of
the reflection subgroup of the centralizer.  With a little effort we
develop this method to the point that many centralizer computations
are very easy.  For example, the fact that the reflection centralizer
in $W(E_8)$ is $W(E_7)\times2$ becomes a quick mental
computation.  We offer many other examples, 
including the reflection
centralizer 
when the Dynkin
diagram is any cycle of odd edges.  Our most complicated example
is the reflection centralizer in Bugaenko's Coxeter group that acts cocompactly on
$8$-dimensional hyperbolic space \cite{Bugaenko}.  

Our method has some overlap with the Brink-Howlett algorithm for
understanding normalizers of parabolic subgroups in Coxeter groups.  (See
\cite{Brink-Howlett}, and the related \cite{Allcock-normalizers} and
\cite{Borcherds}.)  However, in use it feels quite different.  They
present a certain groupoid, any one of whose maximal subgroups is the
normalizer.

In section~\ref{sec-background} we sketch a proof of Brink's theorem following the
ideas of \cite{AMN}.  This proof is quite different from hers, using
covering spaces and topology in place of induction on word lengths.
We hope this alternate proof will be helpful to some people.
In following sections we give explicit generators for the centralizer,
general rules for computing the Coxeter diagram of its reflection
subgroup, and many examples.

The author is grateful to the Japan Society for the Promotion of
Science, the Clay Mathematics Institute and Kyoto University for their
support and hospitality during this work, to
R.\ Howlett for pointing out a mis-drawn Coxeter diagram, and to one
of the referees for referring me to \cite{AMN}.

\section{Background and previous results}
\label{sec-background}

\noindent
We will review some standard Coxeter group theory, and some results of
Brink, Howlett and Alekseevski-Michor-Neretin.  Our perspective will
be geometric, essentially that of Vinberg from
\cite{Vinberg-discrete-groups-generated-by-linear-reflections}.  We
will sketch the proofs, to unify the original
approaches.

A Coxeter system means a pair $(W,S)$ where $W$ is a group and $S$ is
a set of involutions generating $W$, for which the relations
$(s s')^{o(s s')}=1$ suffice to present $W$, where $s,s'$ vary over $S$ and $o(s s')$ means the order of $s s'$.
A
relation ``$(s s')^\infty=1$'' is regarded as no relation at all.
As usual 
the Coxeter diagram of $(W,S)$, usually written $\Delta$,  means the graph with vertex set
$S$ and $s,s'\in S$ joined by an edge marked $o(s s')$.  When actually
drawing diagrams we follow the standard conventions of omitting edges
that would be labeled $2$, omitting labels from edges that would be
labeled $3$ or $4$, and drawing edges that would be labeled $4$ as
double edges.  

We use the semi-standard term
``spherical'' for a Coxeter system or diagram when the corresponding
group is finite.  This reflects the fact that the group acts naturally
on a sphere, rather than say hyperbolic space.  In the many places
where we refer to the parity of an edge label we use the
convention that $\infty$ is neither even nor odd.

Our first goal is to introduce what we call a Vinberg representation,
which provides the setting for the ideas.
A reflection of a real vector space means a linear transformation
which pointwise fixes a hyperplane (its mirror) and negates some
complementary $1$-dimensional space.  Now suppose $V$ is a
finite-dimensional real vector space, $\Gamma$ is a subgroup of
$\GL(V)$ generated by reflections, and $U$ is an open convex subset of
$V$ which $\Gamma$ preserves and acts on properly discontinuously.
One consequence of proper discontinuity is the local finiteness of the
arrangement of the mirrors of all reflections in $\Gamma$.  It follows that
the complement of the mirrors is open.  By a chamber we mean the
closure in $U$ of a component of this complement.  By local
finiteness, the boundary of a chamber is locally polyhedral, so we may
speak of its faces and their dimensions.  In particular we may speak
of a chamber's facets (codimension-one faces).  Another consequence of
proper discontinuity is that each mirror is the mirror of only one
reflection of $\Gamma$, so we may speak unambiguously of the reflection
across each facet of a chamber.

\begin{theorem}
\label{thm-Vinberg-representations}
Let $\chamber$ be a chamber and $S$ the set of reflections across
its facets.  Then (i) $(\Gamma,S)$ is a Coxeter system, (ii) $\Gamma$
acts freely on the set of chambers, and (iii) every point of $U$ is
$\Gamma$-equivalent to a unique point of $\chamber$.
\end{theorem}

\begin{proof}[Proof sketch]
Theorem~$1$ of \cite[IV.4.4]{Bourbaki} addresses a slightly different
situation, the geometric representation of a Coxeter group.  But the
proof applies verbatim to prove (i) and (ii).  More specifically,
consider the Coxeter system $(W,S)$ where $W$ is the abstract group
with generating set $S$ and defining relations $(s s')^{o(s s')}=1$
where $o(s s')$ means the order of $s s'$ as an element of $\Gamma$.
Then $w(\chamber^\circ)\cap \chamber^\circ=\emptyset$
for all $w\in W-\{1\}$.  It follows that $W\to\Gamma$ is an
isomorphism, hence (i) and (ii).  The proof 
in \cite[IV.4.4]{Bourbaki}
 relies on its Lemma~$1$, which is essentially
a standard-form result for the dihedral group in $\GL(V)$ generated by
any two elements of $S$.  In our case one establishes
this lemma using the definition of a chamber as a component of the
mirror complement, rather than the definition of the geometric
representation.   

For (iii) one follows the proof of Prop.~$5$ of \cite[IV.6]{Bourbaki}.
\end{proof}

In this situation we call $(V,U)$ a Vinberg representation of $\Gamma$
and $(V,U,\chamber)$ a Vinberg representation of $(\Gamma,S)$.  The name
honors Vinberg's proof
\cite{Vinberg-discrete-groups-generated-by-linear-reflections} 
that any Coxeter system $(W,S)$ 
with $S$ finite admits such a representation.
If $S$ is
infinite then a Vinberg representation may still exist, for example by
taking the reflections across the facets of a suitable infinite-sided
polyhedron in hyperbolic space, or by taking a subgroup generated by
reflections in  a Coxeter group that admits a Vinberg representation.  But
it may not, for example
the ascending union of the finite symmetric groups is a Coxeter group
but its only finite-dimensional representation is the trivial one.  
One could
contemplate infinite-dimensional Vinberg representations, at the cost
of more care with topological concepts like proper discontinuity.
In our case
one can deduce results for general Coxeter systems from
the corresponding results for the $|S|<\infty$ case; see corollary~\ref{cor-main-theorem}.

For the rest of this section we will fix a Coxeter system $(W,S)$ that
admits some
Vinberg representation $(V,U,\chamber)$.  Our goal is to understand the
$W$-centralizer of a reflection~$s$, meaning an element conjugate
into $S$.  Although not all Coxeter systems
admit Vinberg representations, most interesting ones do, and it turns
out that understanding this case well enough allows us to later remove
the assumption that such a representation exists.  
Define $\WOmega$ as the subgroup of $W$ generated
by the reflections in $C_W(s)$ other than $s$.  These reflections are
the same as those 
that preserve $s$'s mirror $V^s$ and each of the
two half-spaces it bounds (call one of them $\halfV$).  It is easy to
check that
$(V^s,U^s)$ is a Vinberg representation of $\WOmega$.  By
theorem~\ref{thm-Vinberg-representations}, $\WOmega$ is a Coxeter group.  Now choose a chamber
$\COmega$ for $\WOmega$ in $U^s$ and define $\GOmega$ as the subgroup
of $W$ that preserves $\COmega$ and $\halfV$.  Note that
$\COmega$ has one dimension less than $\chamber$.  To avoid confusion we
specify: when we speak of chambers without mentioning $\WOmega$
explicitly, we always mean chambers of $W$.  We have just sketched a
proof of the
following theorem of Howlett.  (The semidirect product decomposition
comes from the freeness of $\WOmega$'s action on its set of chambers.)

\begin{theorem}[{\cite[corollaries 3 and~7]{Howlett}},\cite{Brink-Howlett}]
\label{thm-Howlett-decomposition}
$C_W(s)=\generatedby{s}\times\generatedby{\WOmega,\GOmega}$, and
the latter factor splits as the semidirect product of $\WOmega$ by
$\GOmega$. 
\qed
\end{theorem}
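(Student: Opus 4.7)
The plan is to first recognize $C_W(s)$ as the $W$-stabilizer of $V^s$, peel off the factor $\generatedby{s}$, and then analyze the remaining factor using the simple transitivity of $\WOmega$ on its chambers in $U^s$.

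First, by \VinbergUniqueness\ the reflection $s$ is the unique one with mirror $V^s$, so $wV^s=V^s$ forces $wsw^{-1}=s$; the converse is automatic. Hence $C_W(s)$ coincides with the $W$-stabilizer of $V^s$. Let $C_W(s)^+$ denote the further subgroup that also preserves $\halfV$. Every element of $C_W(s)$ either preserves $\halfV$ or exchanges the two half-spaces; in the latter case $s$ times it lies in $C_W(s)^+$. Since $s$ itself exchanges the half-spaces, $s\notin C_W(s)^+$, and since $s$ has order two and is central in $C_W(s)$, this gives $C_W(s)=\generatedby{s}\times C_W(s)^+$.

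The heart of the matter is to identify $C_W(s)^+$ with $\WOmega\semidirect\GOmega$. I first claim that $C_W(s)^+$ normalizes $\WOmega$: each generator of $\WOmega$ is a reflection preserving $V^s$ and $\halfV$, and both properties are preserved by conjugation by any $w\in C_W(s)^+$. Therefore $C_W(s)^+$ permutes the mirrors, hence the chambers, of $\WOmega$ in $U^s$. Given any $w\in C_W(s)^+$, applying theorem~\ref{thm-Vinberg-representation-properties}\ref{item-Vinberg-representation-properties-simple-transitivity} to the Vinberg representation $(V^s,U^s)$ of $\WOmega$ yields a unique $u\in\WOmega$ with $uw(\COmega)=\COmega$; since $uw$ also preserves $\halfV$, it lies in $\GOmega$, so $w\in\WOmega\cdot\GOmega$. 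The reverse inclusions are immediate: the generating reflections of $\WOmega$ fix $V^s$ setwise (so centralize $s$) and preserve $\halfV$ by construction, while $\GOmega$ preserves $V^s$ and $\halfV$ by definition. Finally, $\WOmega\cap\GOmega$ is trivial because any element of the intersection lies in $\WOmega$ and stabilizes the $\WOmega$-chamber $\COmega$, and simple transitivity forces it to be the identity.

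The main obstacle is not any single calculation but rather recognizing that the setup from the paragraph preceding the theorem makes $(V^s,U^s)$ a genuine Vinberg representation of $\WOmega$, so that theorem~\ref{thm-Vinberg-representation-properties} is applicable on $V^s$ and the chamber $\COmega$ is well defined. Granted this, the rest is bookkeeping with \VinbergUniqueness\ and the half-space structure.
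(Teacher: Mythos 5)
Your argument is correct. Note, though, that the paper does not actually prove this theorem: it is stated with a citation to Howlett (and to Brink--Howlett for the non-spherical case), so there is no in-paper proof to compare against. What you have supplied is a clean, self-contained derivation entirely within the Vinberg-representation framework that the paper sets up, and every step checks out: \VinbergUniqueness\ identifies $C_W(s)$ with the stabilizer of $V^s$ (since $wsw^{-1}$ is a reflection with mirror $wU^s$); centrality of $s$ together with $s\notin C_W(s)^+$ gives the direct factor $\generatedby{s}$; conjugation by $C_W(s)^+$ permutes the defining set of reflections of $\WOmega$, so $\WOmega$ is normal and $C_W(s)^+$ permutes its mirrors and chambers in $U^s$; and simple transitivity of $\WOmega$ on its chambers (theorem~\ref{thm-Vinberg-representation-properties} applied to $(V^s,U^s)$, which the paper asserts is a Vinberg representation of $\WOmega$) yields both the factorization $C_W(s)^+=\WOmega\cdot\GOmega$ and the triviality of $\WOmega\cap\GOmega$. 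The only point worth making explicit is that elements of $\WOmega$ preserve $\halfV$ because its generating reflections do, which you use implicitly when concluding $uw\in\GOmega$. The trade-off is the usual one: the paper's citation keeps it short and credits the original source, while your proof makes the decomposition self-contained and demonstrates that the Vinberg axioms \VinbergLF--\VinbergGeneration\ alone suffice, with no appeal to word-length or root-system arguments.
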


To understand $C_W(s)$ it now suffices to understand $\WOmega$,
$\GOmega$ and the latter's action on the former.  We begin with
$\GOmega$.  The key to understanding it is that the interior
$\COmega^\circ$ of $\COmega$ turns out to be the universal cover of
part of the boundary of the chamber $\chamber$ that we started with.
A nice mental image of this is to fold
$\COmega^\circ$ along its intersections with mirrors of $W$, and then
wrap it around $\chamber$ as one might wrap a
Weyl-chamber-shaped gift.
This idea is due to Alekseevski, Michor and Neretin \cite{AMN},
and we will sketch it in our language.  Vinberg has
informed me that he and O.~Shvartsman knew of it earlier.

Given a codimension~$2$ face of a chamber, we say the angle there is $\pi/n$
if the product of the reflections corresponding to its two facets has
order $n$.  We define $X$ as the boundary of $\chamber$ in $U$, minus those
codimension~$2$ faces with angle $\pi/(\hbox{even})$.

\begin{proposition}[{\cite[Prop. 2.9]{AMN}}]
\label{prop-open-chamber-is-universal-cover}
The natural map $U\to U/W=\chamber$ induces a universal covering map from
$\COmega^\circ$ to a component of $X$, with deck group $\GOmega$.
\end{proposition}

\begin{proof}[Proof sketch]
The first ingredient is that $\COmega^\circ$ contains no
codimension~$3$ face of any of $W$'s chambers.  (Otherwise, the face's
$W$-stabilizer would be a rank~$3$ spherical Coxeter group, containing
$s$.  Every reflection in such a group centralizes some other
reflection in it.  The face therefore lies in the mirror of a
reflection in $\WOmega$, while $\COmega^\circ$ is disjoint from such
mirrors by its definition.)  Similarly, 
$X$ contains no codimension~$3$ face of $\chamber$.  (Such a face would
correspond to a rank~$3$ spherical Coxeter subsystem of $S$, so some
pair of the three facets involved would make angle
$\pi/2$, so the face lies in a codimension~$2$ face of $\chamber$
which we discarded when defining $X$.)

The same argument shows that $\COmega^\circ$ contains no
codimension~$2$ face of any of $W$'s chambers whose angle is
$\pi/(\hbox{even})$.  And $X$ contains no codimension~$2$ face of $\chamber$ with angle
$\pi/(\hbox{even})$ by definition.

On the other hand, if $\chamber'$ is a chamber of $W$ with a facet $F$ in
$\COmega$, and $f$ is a facet of $F$ whose angle in $\chamber'$ is
$\pi/(\hbox{odd})$ then the interior of $f$ {\it does\/} lie in
$\COmega^\circ$.  This is because $f$'s $W$-stabilizer is a
dihedral group of twice odd order, and in such a group no reflection
centralizes any other.  
Furthermore, the facet (of some other chamber) in $U^s$ on the other
side of $f$ is equivalent under this dihedral group to the other
facet of $\chamber'$ containing $f$ (i.e., not $F$).  
For a picture see \cite[fig.~3]{AMN}.
This is what we referred to
when comparing $\COmega^\circ\to \chamber$ to wrapping a gift.  Using this
and the previous paragraphs one can show that $\COmega^\circ\to X$ is a covering map.  It is a universal
covering of a component of $X$ because $\COmega^\circ$ is convex,
hence connected and simply connected.

That the deck group is $\GOmega$ is the fact that if
$x,y\in\COmega^\circ$ are $W$-equivalent, say $g(x)=y$, then they are
also $\GOmega$-equivalent.  This can be proven by using the fact that
the $W$-stabilizer of any element of $\COmega^\circ$ is either
$\generatedby{s}$ or a dihedral group of twice odd order that contains
$s$.  (And that in such a group all reflections are conjugate.)
\end{proof}

The ``odd Coxeter diagram''
$\Dodd$ means the graph with vertex set $S$ and
$s,s'$ joined just if $o(s s')$ is odd.   
Another way to say this is that $V^s\cap V^{s'}\cap X$ is nonempty
just if $s,s'$ are joined in $\Dodd$.  
Using the fact that $X$ contains no
codimension~$3$ faces of $\chamber$, one can regard
$\Dodd$ as a
deformation-retract of $X$ (cf.\ \cite[Lemma 2.8]{AMN}).

It is well-known that the components of $\Dodd$ correspond to the
conjugacy classes of reflections in $W$.  (If elements of $S$ are oddly joined
then they are conjugate in the group they generate, and if they are
not joined by a chain of odd edges then they
map to distinct elements of $W$'s abelianization.)  So our reflection
$s\in W$ distinguishes a component of $\Dodd$, for which we write
$\Dodd_s$,  and the corresponding
component of $X$.  This leads to the following special case of Brink's
theorem on reflection centralizers.  We will recover her full result,
which does not need a Vinberg representation, as corollary~\ref{cor-Brink}.

\begin{theorem}[Brink \cite{Brink}]
\label{thm-Brink-assuming-Vinberg-representation}
The non-reflection part $\GOmega$ of the
centralizer $C_W(s)$ is the free group $\pi_1(\Dodd_s)$.
\end{theorem}

\begin{proof}
Because $s\in S$, $U^s$ contains
$s$'s facet of $\chamber$.  By choice of the chamber $\COmega$ we may
suppose it does as well.
So the
component $X_s$ of $X$ of which $\COmega^\circ$ is the universal cover
is the one corresponding to the component of $\Dodd$ containing $s$.
Proposition~\ref{prop-open-chamber-is-universal-cover} shows that $\GOmega$ is $\pi_1(X_s)$, 
and the homotopy-equivalence $X\homotopic\Dodd$ identifies
$\pi_1(X_s)$ with
$\pi_1(\Dodd,s)$.
\end{proof}

\section{Explicit generators for the centralizer}
\label{sec-explicit-generators}

\noindent
In corollary~\ref{cor-main-theorem} we give an explicit generating set for a reflection
centralizer in any Coxeter group $W$.  We need this for a forthcoming
application to Steinberg and Kac-Moody groups \cite{Allcock-Steinberg-groups}, and it allows
us to prove the full version of Brink's theorem (corollary~\ref{cor-Brink}).  
Also, if $W$ is finitely generated then so is every reflection
centralizer (corollary~\ref{cor-centralizers-finitely-generated}).

Suppose $(W,S)$ is a Coxeter system with diagram $\Delta$ and
$\gamma=(t_0,\discretionary{}{}{}\dots,\discretionary{}{}{}t_n)$ is an
edge-path in $\Dodd$, with $2l_i+1$ being the
label on the edge joining $t_{i-1}$ and $t_i$.  Then we set
$$
p_\gamma:=(t_1t_0)^{l_1}(t_2t_1)^{l_2}\cdots(t_n t_{n-1})^{l_n}
$$
(or $p_\gamma=1$ if $\gamma$ has length~$0$).  This word is Brink's
$\pi(t_0,\dots,t_n)$.   If $\gamma_2$ starts where $\gamma_1$
ends, then obviously $p_{\gamma_1\gamma_2}=p_{\gamma_1}p_{\gamma_2}$.
If $u$ is a vertex of $\Delta$ evenly joined to  $t_n$, say with edge label $2\lambda$, then we define
$$
r_{\gamma,u}:=p_\gamma\cdot (u t_n)^{\lambda-1}u\cdot p_\gamma^{-1}.
$$
Whenever we refer to $r_{\gamma,u}$ we implicitly require $u$ to be
evenly joined to the endpoint of $\gamma$.
If $\gamma_2$ starts where $\gamma_1$ ends, then obviously
$p_{\gamma_1}r_{\gamma_2,u}p_{\gamma_1}^{-1}=r_{\gamma_1\gamma_2,u}$.

\begin{theorem}
\label{thm-explicit-generators}
Suppose $(W,S)$ admits a Vinberg representation $(V,U,\chamber)$ and $s\in S$.
\begin{enumerate}
\item
\label{item-generators-for-W-Omega}
The set of elements of $W$ of the form $r_{\gamma,u}$, where $\gamma$
is an edge-path starting
at $s$, forms a Coxeter system for $\WOmega$.
\item
\label{item-generators-for-G-Omega}
The map $\pi_1(\Dodd,s)\to W$ given by $\gamma\mapsto p_\gamma$ is an
isomorphism onto $\GOmega$.
\end{enumerate}
\end{theorem}

\begin{remarks}
(1) Once we have proven this theorem we will reprove it as the following
  corollary without assuming the existence of a Vinberg representation.
(2) It may happen that $r_{\gamma,u}=r_{\gamma',u'}$ even if
  $(\gamma,u)\neq(\gamma',u')$.  For this reason the Coxeter system
  consists of the set of elements of $W$ having this form, rather than the
  set of words themselves.  We will work out the equalities of this sort in
  the next section.  
(3) This Coxeter system is the one associated to the
  chamber of $\COmega$ that contains $s$'s facet of~$\chamber$.
\end{remarks}

\begin{corollary}
\label{cor-main-theorem}
Suppose $(W,S)$ is any Coxeter system and $s\in S$.  Define $\WOmega$
as the subgroup of $C_W(s)$ generated by all the reflections it
contains except $s$.  Define $\GOmega$ as the subgroup of $W$ generated by the
elements $p_\gamma$ from \ref{item-generators-for-G-Omega} of
theorem~\ref{thm-explicit-generators}.  Then the conclusions of that theorem hold, and
$C_W(s)=\generatedby{s}\times(\WOmega\semidirect\GOmega)$.
\end{corollary}

\begin{proof}
The corollary is a union of assertions about the subgroups
$\generatedby{S^0}$ where $S^0$ varies over all finite subsets of $S$
containing $s$.
For example, the assertion that $\Sigma:=\{r_{\gamma,u}\}\sset W$ is a Coxeter system
for $\WOmega$ says that a certain map to $\WOmega$ from a Coxeter
group $W_\Sigma$ with generating set $\Sigma$ 
is an isomorphism.   To show surjectivity, consider
a reflection in $C_W(s)$ other than $s$.  It lies in some
$\generatedby{S^0}$, hence in the corresponding subgroup $\WOmega^0$
of $C_{\generatedby{S^0}}(s)$.  Applying theorem~\ref{thm-explicit-generators} shows that it
lies in the group generated by $\Sigma\cap\generatedby{S^0}$.  Similarly, if
$W_\Sigma\to\WOmega$ failed to be injective, then there would be some
finite subset $\Sigma^0\sset\Sigma$ such that
$W_{\Sigma^0}\to\WOmega$ failed to be injective.  But 
$\Sigma^0$ would lie in $\generatedby{S^0}$ for some finite $S^0\sset S$, and the failure of injectivity would
contradict theorem~\ref{thm-explicit-generators}.  
The rest of our claims follow by similar arguments.
\end{proof}

\begin{corollary}[Brink \cite{Brink}]
\label{cor-Brink}
Suppose $(W,S)$ is any Coxeter system and $s\in S$.  Then
the centralizer $C_W(s)$ is the semidirect product of its reflection subgroup by the free group $\pi_1(\Dodd,s)$.
\qed
\end{corollary}

\begin{corollary}
\label{cor-centralizers-finitely-generated}
Suppose $(W,S)$ is any Coxeter system, $s\in S$ and $Z$ is a set of edge-loops
in $\Dodd_s$ generating $\pi_1(\Dodd,s)$.  Suppose given edge-paths
$\delta_t$ in $\Dodd_s$ from $s$ to $t$, for each $t\in\Dodd_s$.  Then
the $p_{\gamma\in Z}$ generate $\GOmega$ and together with the
$r_{\delta_t,u}$ they generate $\WOmega\semidirect\GOmega$.  In
particular, if $S$ is finite then $C_W(s)$ is finitely generated.
\qed
\end{corollary}

The rest of this section is devoted to proving theorem~\ref{thm-explicit-generators}.  The
focus in the covering space argument in proposition~\ref{prop-open-chamber-is-universal-cover} was on
$\COmega^\circ$, but now we focus on $\COmega$ to avoid fussing over
the missing faces.  By a \emph{tile} we mean a facet (of some
$W$-chamber) that lies in $\COmega$.  Its \emph{type} means its image
in $\chamber=U/W$, which is a facet of $\chamber$.  
To avoid confusion we write $F_t$ for the facet of $\chamber$ corresponding
to $t\in S$.
By proposition~\ref{prop-open-chamber-is-universal-cover} the tiles correspond to the nodes of the
universal cover $\tildeDodd_s$, and their types to the nodes of
$\Dodd_s$.  By the \emph{base tile} we mean the tile $\chamber\cap\COmega$,
for which we already have the name $F_s$.
Now we can explain the words $p_\gamma$:

\begin{lemma}
\label{lem-role-of-the-p-gammas}
Suppose $T$ is a tile, $(T_0=F_s,T_1,\dots,T_n=T)$ is an edge-path in
$\tildeDodd_s$ from the base tile to it, and $\gamma=(t_0=s,\dots,t_n)$ is
its projection to $\Dodd_s$.  Then $p_\gamma$ sends $\chamber$ to the unique
chamber in $\halfV$ having $T$ as a facet, and  it sends 
$F_{t_n}$ to $T_n$.
\end{lemma}

\begin{proof}
The key claim is the following: 
the image of $F_{t_n}$ under
$(t_n t_{n-1})^{l_n}$ lies in $t_{n-1}$'s mirror and its intersection with
$F_{t_{n-1}}$ is
$F_{t_{n-1}}\cap F_{t_n}$; furthermore, $\chamber$ and $(t_n
t_{n-1})^{l_n}(\chamber)$ lie on the same side of this mirror.  The proof is
a picture-drawing exercise in a dihedral group of twice odd order
(cf. \cite[Fig.~3]{AMN}).

The lemma is just this claim wrapped inside an induction.  The case
$n=0$ is trivial, so take $n>0$ and let $\beta$ be the subpath
$(T_0,\dots,T_{n-1})$ of $\gamma$.  The induction hypothesis tells us
that $p_\beta$ sends $F_{t_{n-1}}$ to $T_{n-1}$ and $\chamber$ into $\halfV$.
Conjugating the previous paragraph's claim by $p_\beta$ tells us that
$p_\beta\circ(t_n t_{n-1})^{l_n}\circ p_\beta^{-1}$ has two
properties.  First, it sends $p_\beta(F_{t_n})$ into the mirror
containing $p_\beta(F_{t_{n-1}})=T_{n-1}$ (this mirror is $V^s$) and
on the other side of $p_\beta(F_{t_{n-1}}\cap F_{t_n})$ from
$T_{n-1}$.  Second, it sends $p_\beta(\chamber)$ into the same side of $V^s$
containing $p_\beta(\chamber)$.  These two statements unravel to give
$p_\gamma(F_{t_n})=T_n$ and $p_\gamma(\chamber)\sset\halfV$.
\end{proof}

Now we address $\WOmega$.  Given a tile $T$ of type $t$, $T$'s facets
correspond to intersections of $F_t$ with other facets $F_u$ of $\chamber$.
Given a facet $F_{u\neq t}$ of $\chamber$ that meets $F_t$, we write $[T,u]$
for the corresponding facet of $T$.  If $F_t,F_u$ make angle
$\pi/(\hbox{odd})$, then by proposition~\ref{prop-open-chamber-is-universal-cover} there is another tile
on the other side of $[T,u]$ sharing that facet.  On the other hand, if the angle is
$\pi/(\hbox{even})$ then we know from the same proposition that
$[T,u]$ is not in $\COmega^\circ$, so it lies in a facet of $\COmega$.
That is, $\WOmega$ contains a reflection across $[T,u]$.  Every facet
of $\COmega$ contains some such $[T,u]$.  The discussion after
theorem~\ref{thm-Vinberg-representations} tells us that the reflections obtained this way form a
Coxeter system for $\WOmega$.  These reflections are just the $r_{\gamma,u}$'s:

\begin{lemma}
\label{lem-reflections-in-W-Omega}
Suppose $T$ is a tile of type $t$ and $u\in S$ is evenly joined to
$t$.  Suppose $(T_0=F_s,T_1,\dots,T_n=T)$ is a path in $\tildeDodd$,
and $\gamma$ the corresponding path in $\Dodd$.  Then $r_{\gamma,u}$
lies in $\WOmega$ and is the reflection whose mirror contains $[T,u]$.
\end{lemma}

\begin{proof}
This is similar to lemma~\ref{lem-role-of-the-p-gammas}; recall from the definition of
$r_{\gamma,u}$ that we write $2\lambda$ for the edge label between
$t_n$ and $u$.  The key claim is that $(u t_n)^{\lambda-1}u$ is the
reflection in $\generatedby{t_n,u}$ that centralizes $t_n$ (other than
$t_n$ itself).  This is a slightly different picture-drawing argument
than before.  The lemma follows from this claim just as before.
\end{proof}

\begin{proof}[Proof of theorem~\ref{thm-explicit-generators}]
Before lemma~\ref{lem-reflections-in-W-Omega} we explained how the facets of $\COmega$
are covered by the $[T,u]$, and this lemma tells us that the
$r_{\gamma,u}$'s are the reflections across them.  We know from
theorem~\ref{thm-Vinberg-representations} that these reflections form a Coxeter system for
$\WOmega$, proving part \ref{item-generators-for-W-Omega}.

For part \ref{item-generators-for-G-Omega} we note that an element of
$\GOmega$ sends $F_s$ to some tile, hence equals
$p_\gamma$ for some path $\gamma$ in $\Dodd$ based at $s$.  This
equality uses
the simple transitivity of $W$ on chambers.  If the endpoint of
$\gamma$ in $\Dodd$ is $t$, then lemma~\ref{lem-role-of-the-p-gammas} shows that $p_\gamma$
conjugates $t$ to $s$.  Since membership in $\GOmega$ requires
commutativity with $s$, this shows $t=s$.  So $p_\gamma$ can only lie
in $\GOmega$ if it is a loop.  Conversely, if $\gamma$ is a loop then
the lemma shows that $p_\gamma$ sends $F_s$ into $V^s$ and hence
centralizes~$s$.  This shows that the $p_\gamma$'s with $\gamma$ a
loop generate $\GOmega$, and the freeness of $\GOmega$ comes from
theorem~\ref{thm-Brink-assuming-Vinberg-representation}. 
\end{proof}

\section{The Coxeter diagram of $\WOmega$}
\label{sec-diagram-of-stabilizer}

\noindent
We showed in theorem~\ref{thm-Vinberg-representations} that $\WOmega$ is a Coxeter group, and
described a Coxeter system for it in corollary~\ref{cor-main-theorem}.  Our goal in this
section is to work out the Coxeter diagram $\DOmega$ in a manner
making obvious the action of $\GOmega$.  This
gives a complete answer to the problem of  presenting reflection centralizers in Coxeter groups.  We
will use the geometric language of Vinberg representations, but the
results transfer to general Coxeter groups by the methods used for
corollary~\ref{cor-main-theorem}.

Recall from section~\ref{sec-explicit-generators} that a tile means a facet (of some
$W$-chamber) that lies in $\COmega$.  By an \emph{arrow} we mean a
facet of a tile, that is not a facet of any other tile, i.e., it lies
in the boundary of $\COmega$.  This peculiar terminology helps
organize the calculations in examples; see section~\ref{sec-examples}.  The arrows
fall into equivalence classes according to which facet of $\COmega$
they lie in, which we call the arrow classes.  To describe $\DOmega$
we must find the arrow classes and understand how the corresponding
facets of $\COmega$ meet.  

Whenever we use a symbol with a tilde, such as $\tildeA$, for a node
of $\tildeDodd_s$, we will use the corresponding symbol without the
tilde for its image in $\Dodd$.
We explained in section~\ref{sec-explicit-generators} that the arrows are in bijection with
the pairs $[\tildeA,B]$, where $\tildeA$ and $B$ are vertices of
$\tildeDodd_s$ and $\Delta$ respectively, and the edge joining $A$ and $B$
in $\Delta$ is evenly-labeled (which includes the case that the edge is absent).
The next step is to determine the dihedral angles among the arrows.
(If two arrows meet in codimension~$1$ then we say they make dihedral angle $\pi/n$ as a shorthand for the
product of their reflections having order~$n$.  This order may be~$1$
because they may lie in the same facet of $\COmega$.)

\begin{lemma}
\label{lem-facet-relations}
If $\tildeA$ is a vertex of $\tildeDodd_s$ and $B$ and $C$ are
vertices of $\Delta$, such that the subdiagram of $\Delta$ formed by $A$, $B$
and $C$ appears in table~\ref{tab-of-dihedral-angles}, then the
indicated arrows intersect in codimension~$1$, with the stated
dihedral angle.  Conversely, if two arrows meet in codimension~$1$
then there exist such $\tildeA,B,C$, such that the arrows are the ones
indicated in the table.
\end{lemma}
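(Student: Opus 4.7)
The plan is to reduce the statement to a local analysis at the common face $\phi$ of the two arrows, and then enumerate the finitely many rank-three spherical Coxeter diagrams that can arise there.

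Two arrows meet in codimension one as subsets of $V^s$ precisely when they share a codimension-two face $\phi$ of $V^s$, i.e.\ a codimension-three face of $V$ contained in $V^s$. By \VinbergLF\ and theorem~\ref{thm-Vinberg-representation-properties}, the stabilizer $G$ of a generic point of $\phi$ is a spherical Coxeter group of rank three, and since $\phi\sset V^s$ it contains $s$. Choosing a chamber $C$ in $\halfV$ that has $\phi$ as a face, $G$ is generated by three elements of $\Pi_C$, one of which is $W$-conjugate to $s$.

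For the forward direction, given a triple $[\tildeJ,K,L]$ as in the statement, I would take the chamber $C$ associated to $\tildeJ$ so that $J,K,L$ become vertices of $\D_C$, and let $\phi$ be the codimension-three face of $C$ corresponding to the subdiagram $\{J,K,L\}$---which really is a face because the subdiagram is spherical by hypothesis. The two arrows $[\tildeJ,K]$ and $[\tildeJ,L]$ both contain $\phi$ and meet there in codimension one. The product of the reflections across them lies in $\WOmega$, and its order is determined by the action of the local rank-three Coxeter group $G$ on the orthogonal complement of $\phi$; reading this off case-by-case from the shape of the rank-three subdiagram produces the entries of table~\ref{tab-of-dihedral-angles}.

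For the converse, suppose two arrows meet in codimension one at a face $\phi$, and let $G$ be the rank-three spherical group stabilizing a generic point. The possibilities for $G$ are $A_3$, $B_3$, $H_3$, $A_1^3$, and $A_1\times I_2(m)$ for $m\ge3$. In each case I would enumerate the $G$-conjugacy classes of reflections that can play the role of $s$, then determine (a)~which reflections in $G$ lie in $\WOmega$, (b)~which facets of $\phi$ in $V^s$ belong to chambers in $\halfV$ and so are candidate tiles, and (c)~of those, which are arrows, i.e.\ lie in $\partial\COmega$ rather than being interior facets identified in pairs by $\WOmega\cap G$. Projecting the corresponding simple reflections of $G$ back to the fundamental chamber $\calC$ recovers a rank-three subdiagram $\{J,K,L\}$ of $\D$, and the order of the product of the two arrow-reflections in $\WOmega\cap G$ is the dihedral angle; this matches exactly one row of the table.

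The main obstacle will be executing this rank-three case analysis cleanly. Several of the diagrams have multiple reflection conjugacy classes, so for each of the handful of spherical diagrams one has to pick the vertex representing $s$ from more than one orbit, then decide which of the other two vertices are joined to $J$ by absent or evenly labeled edges (a prerequisite for the associated facet to be an arrow rather than an interior facet identified away by $\WOmega\cap G$), and finally compute the order of the product of the two resulting arrow-reflections in $V^s$. Nothing in any individual case is deep, but ensuring that each configuration in the table is realized by exactly one triple, and conversely, requires careful bookkeeping.
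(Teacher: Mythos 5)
Your overall strategy---localizing at the common codimension-three face $\phi$, identifying its stabilizer with a rank-three spherical Coxeter group containing $s$, and enumerating the finitely many cases---is exactly the paper's. Your converse direction is sound and matches the paper's completeness check: the paper lists the spherical three-vertex diagrams with $J$ and $K$ evenly joined or unjoined and observes that the ones missing from the table reduce to table entries by swapping two labels, under which the stated conclusion is symmetric.

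The gap is in your forward direction. You assert that the two arrows through $\phi$ are $[\tildeJ,K]$ and $[\tildeJ,L]$, i.e.\ two facets of the \emph{same} tile $\tildeJ$. That is true only when $J$ and $L$ are evenly joined or unjoined (the first three rows of the table). When $J$ and $L$ are oddly joined, the facet of $\tildeJ$ corresponding to $L$ is not an arrow at all---it is an interior facet of $\COmega$ shared with the adjacent tile $\tildeL$---and the second arrow through $\phi$ is a facet of $\tildeL$ (or, after a further odd join from $L$ to $K$, of the next tile $\tildeK$). Identifying which facet of which tile this is, and computing the resulting dihedral angle, is the real content of rows four through seven of the table and is where the paper does its actual work: it ``unfolds'' by rotating $F_L$ (and then $F_K$) into the hyperplane $H$ representing $U^s$ in the rank-three picture, so that the angle between the two arrows becomes $\ell(E_J)$, $\ell(E_J)+\ell(E_L)$, or $\ell(E_J)+\ell(E_K)+\ell(E_L)$ on the spherical triangle cut out by the rank-three group, readable from the tessellation of the sphere by its chambers. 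Your plan needs this unfolding step, or an equivalent bookkeeping of which tiles border $\phi$ and which of their facets lie in the boundary of $\COmega$; as literally stated, your case analysis would pair the wrong arrows for more than half the table.
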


\psset{unit=\tablescaleunit}
\newcommand{\noderadius}{\tablescalenoderadius}
\newcommand{\singlewidth}{\tablescalesinglewidth}
\begin{table}
\begingroup
\newcommand{\OR}{\quad\hbox{or}\quad}
\newcommand{\BETWEEN}{\noalign{\vskip10pt}}
\newcommand{\noORspacer}{\kern27pt}
\begin{align}
\label{eq-A4B3C}
\raisebox{-.4\height}{%
\begin{pspicture}(0,0)(100,87)
\rput[b](50,100){$A$}%
\rput[r](0,0){$B$\,\,}%
\rput[l](100,0){\,\,$C$}%
\double{50}{87}{100}{0}%
\single{0}{0}{100}{0}%
\solid{50}{87}
\solid{0}{0}
\solid{100}{0}
\end{pspicture}
}%
\kern20pt
&\implies
\hbox{$[\tildeA,B]$ and $[\tildeA,C]$ make angle $\pi/4$}.
\\
\BETWEEN
%
\label{eq-AevenB2C}
\raisebox{-.4\height}{%
\begin{pspicture}(0,0)(100,87)
\single{50}{87}{0}{0}%
\rput[rb](25,43){even\,}%
\solid{50}{87}
\solid{0}{0}
\solid{100}{0}
\end{pspicture}
}%
\kern20pt
&\implies
[\tildeA,B]\mathrel{\bot}[\tildeA,C].
\\
\BETWEEN
\label{eq-A2BnC}
\raisebox{-.4\height}{%
\begin{pspicture}(0,0)(100,87)
\single{0}{0}{100}{0}%
\rput[b](50,10){$n$}%
\solid{50}{87}
\solid{0}{0}
\solid{100}{0}
\end{pspicture}
}%
\kern20pt
&\implies
\hbox{$[\tildeA,B]$ and $[\tildeA,C]$ make angle $\pi/n$}.
\\
\BETWEEN
%
\label{eq-B4A3C}
\raisebox{-.4\height}{%
\begin{pspicture}(0,0)(100,87)
\double{50}{87}{0}{0}%
\single{50}{87}{100}{0}%
\solid{50}{87}
\solid{0}{0}
\solid{100}{0}
\end{pspicture}
}%
\kern20pt
&\implies
[\tildeA,B]\mathrel{\bot}[\tildeC,B].
\\
\BETWEEN
\label{eq-AoddC2B}
\raisebox{-.4\height}{%
\begin{pspicture}(0,0)(100,87)
\single{50}{87}{100}{0}%
\rput[lb](75,43){\rm\,odd}%
\solid{50}{87}
\solid{0}{0}
\solid{100}{0}
\end{pspicture}
}%
\kern20pt
&\implies
\hbox{$[\tildeA,B]$ and $[\tildeC,B]$ make angle $\pi$}.
\\
\BETWEEN
\label{eq-A3C3B}
\raisebox{-.4\height}{%
\begin{pspicture}(0,0)(100,87)
\single{100}{0}{0}{0}%
\single{50}{87}{100}{0}%
\solid{50}{87}
\solid{0}{0}
\solid{100}{0}
\end{pspicture}
}%
\kern20pt
&\implies
\hbox{$[\tildeA,B]$ and $[\tildeB,A]$ make angle $\pi$}.
\\
\BETWEEN
%
\label{eq-A5C3B}
\raisebox{-.4\height}{%
\begin{pspicture}(0,0)(100,87)
\single{100}{0}{0}{0}%
\single{50}{87}{100}{0}%
\rput[bl](75,43){\,$5$}
\solid{50}{87}
\solid{0}{0}
\solid{100}{0}
\end{pspicture}
}%
\kern20pt
&\implies
[\tildeA,B]\mathrel{\bot}[\tildeB,A].
\end{align}
\endgroup
\caption{Dihedral angles between arrows (facets of tiles); see
  lemma~\ref{lem-facet-relations}.  We implicitly label the vertices
  of all the diagrams $A$, $B$, $C$ as in the first one. 
 In the last four lines $\tildeC$ means the tile of
  type $C$ adjacent to $\tildeA$, and in the last 
  two lines $\tildeB$ means the tile of type $B$ adjacent to
  $\tildeC$.}
\label{tab-of-dihedral-angles}
\end{table}

\begin{proof}
Suppose $[\tildeA,B]$ and $[\tildeA',B']$ are arrows, whose
intersection $\phi$ has codimension one in each.  Write $\chamber$ for the
chamber associated to $\tildeA$.  Then $\phi$ is a codimension~$3$
face of $\chamber$, so it corresponds to a spherical $3$-vertex subdiagram of
$\Delta_\chamber$, containing $A$ and $B$.  Write $Y$ for the corresponding finite Coxeter
group and $C$ for the third vertex.  We
next verify the conclusions of the theorem if this subdiagram
appears in table~\ref{tab-of-dihedral-angles}.

The calculation takes place entirely in the standard representation of
$Y$, which we think of as transverse to
$\phi$.  In this $\R^3$, $U^s$ appears as a hyperplane $H$, $\halfV$
as a half-space bounded by $H$, $\chamber$ as a chamber of $Y$ with a facet
in $H$, and $\tildeA$ equal to this facet.  The two other facets of
this chamber correspond to $B$ and $C$.  We write $F_A$, $F_B$ and
$F_C$ for these facets, and $R_{A B}$, $R_{B C}$ and $R_{C A}$ for the
rays where these facets meet.

The simplest case is when $A$ and $C$ are evenly joined.  Then
$[\tildeA,C]$ is also an arrow containing $\phi$,  so it is the only
one other than $[\tildeA,B]$.  Also, $[\tildeA,B]$
and $[\tildeA,C]$ correspond to $R_{A B}$ and $R_{AC}$, so the angle
between them is the angle between these rays in
$\R^3$.  

The next case is when $A$ and $C$ are oddly joined and $C$ and $B$ are
unjoined or evenly joined.  Let $\Theta$ be the rotation around
$R_{C A}$ with $\Theta(F_C)$ in $H$ but not overlapping $F_A$.  Then
$\tildeC$ corresponds to $\Theta(F_C)$, and $[\tildeC,B]$ is the arrow
meeting $[\tildeA,B]$ in $\phi$; it corresponds to $\Theta(R_{B C})$.
So the angle between these arrows is the angle in $\R^3$ between $R_{A
  B}$
and $\Theta(R_{B C})$.  (This rotation process is the
reverse of the gift-wrapping process of section~\ref{sec-background}.)

In the final case, $A$ and $C$ are oddly joined and so are $C$ and
$B$.  We will apply
a second rotation.  Namely, let $\Theta'$ be the rotation around
$\Theta(R_{B C})$ with $\Theta'\circ\Theta(F_B)$ in $H$ but not
overlapping $\Theta(F_C)$.  Then $\tildeB$ corresponds to
$\Theta'\circ\Theta(F_B)$, and $[\tildeB,A]$ is the arrow meeting
$[\tildeA,B]$ in $\phi$; it corresponds to $\Theta'\circ\Theta(R_{A B})$.  The angle between these arrows is the angle
in $\R^3$ between $R_{A B}$ and $\Theta'\circ\Theta(R_{A B})$.  

One can find these angles without computation.  Consider the edges
$E_A$, $E_B$ and $E_C$ of the spherical triangle defined by $F_A$,
$F_B$ and $F_C$.  In the three cases the desired angle is $\ell(E_A)$,
$\ell(E_A)+\ell(E_C)$ and $\ell(E_A)+\ell(E_B)+\ell(E_C)$, where
$\ell$ indicates length.  Drawing the tessellation of the sphere by
$Y$'s chambers makes it easy to recognize which submultiple of $\pi$
this is.  This justifies the entries in table~\ref{tab-of-dihedral-angles}.

The table is complete because one can write down all possibilities
for the spherical diagram on $A$, $B$ and $C$; $A$ and $B$ should be evenly
joined since $[\tildeA,B]$ is an arrow.  The possibilities
other than those in table~\ref{tab-of-dihedral-angles} are
$$
\raisebox{-.4\height}{%
\begin{pspicture}(0,0)(100,87)
\double{50}{87}{0}{0}%
\single{0}{0}{100}{0}%
\solid{50}{87}
\solid{0}{0}
\solid{100}{0}
\end{pspicture}
}%
\qquad
\raisebox{-.4\height}{%
\begin{pspicture}(0,0)(100,87)
\single{50}{87}{100}{0}%
\rput[lb](75,43){\rm\,\,even}%
\solid{50}{87}
\solid{0}{0}
\solid{100}{0}
\end{pspicture}
}%
\qquad
\raisebox{-.4\height}{%
\begin{pspicture}(0,0)(100,87)
\double{100}{0}{0}{0}%
\single{50}{87}{100}{0}%
\solid{50}{87}
\solid{0}{0}
\solid{100}{0}
\end{pspicture}
}%
\qquad
\raisebox{-.4\height}{%
\begin{pspicture}(0,0)(100,87)
\single{100}{0}{0}{0}%
\single{50}{87}{100}{0}%
\rput[t](50,-10){$5$}
\solid{50}{87}
\solid{0}{0}
\solid{100}{0}
\end{pspicture}
}%
$$
\vskip12pt 
\noindent
where we use the table's labeling of vertices.  The first and second differ
from \eqref{eq-A4B3C} and \eqref{eq-AevenB2C} by $B\biarrow C$, the third from \eqref{eq-B4A3C}
by $A\biarrow C$ and the last from \eqref{eq-A5C3B} by 
$A\biarrow B$.  In all cases the conclusion of the relevant line of
the table is symmetric under the same interchange.  So if the diagram
of $A,B,C$ doesn't appear in the table then we just swap 
$[\tildeA,B]$ and $[\tildeA',B']$ .
\end{proof}

The lemma allows one to compute the Coxeter diagram $\DOmega$:

\begin{theorem}
\label{thm-dihedral-angles}
The equivalence relation of arrows lying in the same facet of
$\COmega$ is generated by the equivalence of $[\tildeA,B]$ with
$[\tildeC,B]$ in the situation of \eqref{eq-AoddC2B} and that of
$[\tildeA,B]$ with $[\tildeB,A]$ in the situation of \eqref{eq-A3C3B}.

If two
arrow classes have representative arrows as in one of the other
entries of the table, then the corresponding facets of $\COmega$ have
the listed dihedral angle.  If they have no such representatives then
they do not meet, and the edge of $\DOmega$ joining them is
labeled~$\infty$. 
\end{theorem}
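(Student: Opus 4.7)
The plan is to interpret the arrow equivalence relation geometrically and read off both parts of the theorem from Lemma \ref{lem-facet-relations}. By definition the arrow classes biject with the facets of $\COmega$, and hence with the vertices of $\DOmega$. I first verify the forward direction: in configurations \eqref{eq-AoddC2B} and \eqref{eq-A3C3B}, the listed pair of arrows has dihedral angle $\pi$, meaning the product of the corresponding reflections has order one. The reflections therefore coincide, the two arrows lie in the same mirror of $\WOmega$, and since $\COmega$ meets a mirror in a single (convex) facet, they lie in the same facet of $\COmega$.

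For the converse, let $\alpha$ and $\beta$ be arrows in a common facet $F$ of $\COmega$. The arrows contained in $F$ tile it, meeting one another along codimension-two faces of $\COmega$ interior to $F$. Joining $\alpha$ to $\beta$ by a generic path in $F$, I would arrange it to cross only such interior faces, transversely and one at a time; each crossing is a codimension-two face of $\COmega$ in $V^s$ where two arrows meet in codimension one. By Lemma \ref{lem-facet-relations} each such adjacency fits one of the seven table configurations, and because both arrows stay inside $F$ the dihedral angle is $\pi$, forcing case \eqref{eq-AoddC2B} or \eqref{eq-A3C3B}. Chaining the crossings exhibits $\alpha$ and $\beta$ as equivalent under the relation generated by the two listed pair-relations.

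For the dihedral angles, suppose two distinct arrow classes have representatives as in one of entries \eqref{eq-A4B3C}--\eqref{eq-B4A3C} or \eqref{eq-A5C3B}. Then the facets of $\COmega$ housing them meet along the codimension-two face where the representatives intersect, and the dihedral angle there, identified by Lemma \ref{lem-facet-relations}, is the one stated in the table; since facets of $\COmega$ lie in hyperplanes of $V^s$, this is the dihedral angle wherever they meet. Conversely, if no such representative pair exists, the two facets cannot meet in $U^s$: any codimension-two meeting would, at a generic point, expose representative arrows meeting in codimension one and fitting some table entry, with \eqref{eq-AoddC2B} and \eqref{eq-A3C3B} excluded by distinctness of the classes. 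Non-meeting facets of $\COmega$ correspond to simple reflections of $\WOmega$ whose product has infinite order, so the edge of $\DOmega$ between them is labeled $\infty$.

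The main obstacle is the generic-path argument in the second paragraph: one must verify that the tiling of a facet $F$ of $\COmega$ by its arrows is a locally finite cell decomposition, and that a generic path in $F$ may be chosen to avoid the codimension-three skeleton of that decomposition. Both follow from \VinbergLF\ applied to the Vinberg representation $(V^s,U^s)$ of $\WOmega$, but the bookkeeping for codimension-two faces of $\COmega$ at which several arrows or several facets meet simultaneously requires attention.
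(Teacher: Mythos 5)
Your proof is correct and takes essentially the same route as the paper's own (three-sentence) proof: a chain of pairwise-adjacent arrows inside a facet of $\COmega$, with each adjacency forced by Lemma~\ref{lem-facet-relations} into one of the two angle-$\pi$ entries \eqref{eq-AoddC2B} and \eqref{eq-A3C3B}, and the remaining two claims read off directly from that lemma. You merely fill in the details the paper leaves implicit, namely the generic-path construction of the chain, the forward inclusion, and the justification of the ``obvious'' $\infty$ case.
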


\begin{proof}
If two arrows lie in the same facet of $\COmega$ then there is a chain
of arrows joining them, each lying in that facet of $\COmega$ and
meeting the next in codimension~$1$.  This proves the first claim.
The second follows immediately from lemma~\ref{lem-facet-relations},
and the third is obvious.
\end{proof}

We close the section with a few remarks on the language.  We visualize
$[\tildeA,B]$ as an arrow pointing from the vertex
$\tildeA$ of $\tildeDodd_s$ to the vertex $B$ of $\Delta$.  We define a tail
class as an equivalence class of facets under the relation
\eqref{eq-AoddC2B}, i.e., $[\tildeA,B]$ and $[\tildeC,B]$ are
equivalent when $A,B,C$ form the configuration \eqref{eq-AoddC2B}.
The reason for the name is that the equivalence corresponds the tail moving around in
$\tildeDodd_s$  while the head stays fixed in $\Delta$.
\psset{unit=\bigscaleunit}
\renewcommand{\noderadius}{\bigscalenoderadius}
\renewcommand{\singlewidth}{\bigscalesinglewidth}
$$
\raisebox{-.4\height}{%
\begin{pspicture}(-10,-10)(110,90)
\solid{0}{0}
\solid{50}{87}
\solid{100}{0}
\single{50}{87}{100}{0}
\rput[bl](75,43){\,\,odd}
\arrowSW{50}{87}
\end{pspicture}
}%
\kern20pt
=
\raisebox{-.4\height}{%
\begin{pspicture}(-10,-10)(110,90)
\solid{0}{0}
\solid{50}{87}
\solid{100}{0}
\single{50}{87}{100}{0}
\rput[bl](75,43){\,\,odd}
\arrowW{100}{0}
\end{pspicture}
}%
\kern10pt
\qquad\qquad\qquad
\raisebox{-.4\height}{%
\begin{pspicture}(-10,-10)(110,90)
\solid{0}{0}
\solid{50}{87}
\solid{100}{0}
\single{50}{87}{100}{0}
\single{0}{0}{100}{0}
\arrowSW{50}{87}
\end{pspicture}
}%
=
\raisebox{-.4\height}{%
\begin{pspicture}(-10,-10)(110,90)
\solid{0}{0}
\solid{50}{87}
\solid{100}{0}
\single{50}{87}{100}{0}
\single{0}{0}{100}{0}
\arrowNE{0}{0}
\end{pspicture}
}%
$$ The second picture is a graphical interpretation of
\eqref{eq-A3C3B}, although one must be careful keeping track of which
vertices lie in $\tildeDodd_s$ and which lie in $\Delta$.  In most of our examples in
the next section $\Dodd_s$ will be a tree, so $\tildeDodd_s$ may be
regarded as a subdiagram of $\Delta$.  Then we can take the figure
literally.

\section{Examples}
\label{sec-examples}

\noindent
In this section we give many examples of reflection centralizers in
Coxeter groups, illustrating
theorem~\ref{thm-dihedral-angles}.  
See \cite{AMN} for some different and very nice examples worked from another perspective.
We generally proceed by working out the tail classes, fusing
them into arrow classes, and then finding the angles.  

Suppose first that $\Delta$ is a tree of single edges (edge label~$3$).
Then there is only one class of reflection, so we don't need to choose
a component of $\Dodd$.  The tail classes are easy to work out: each
contains a unique arrow $[A,B]$ where $A$ and $B$ have distance~$2$ in
$\Delta$.  (Proof: move the tail toward the head.)  The tail classes fuse
in pairs, got by reversing these arrows.  The end result is that the
generators for $\WOmega$ are in bijection with the $A_3$ diagrams in
$\Delta$.  Almost all the angles can be worked out using \eqref{eq-A2BnC}.
In our situation it reads: if two arrow classes have representatives
with the same tail, then their edge label in $\DOmega$ is the same as
the one between their tips in $\Delta$.

The first example is $A_{n\geq3}$, which has $n-2$ arrow classes:
$$
\begin{pspicture*}(-\noderadius,-\noderadius)(508,82)
\solid{0}{0}
\solid{100}{0}
\solid{200}{0}
\solid{300}{0}
\solid{400}{0}
\solid{500}{0}
\single{0}{0}{500}{0}
\dbldown{100}{0}{}
\dbldown{200}{0}{}
\dbldown{300}{0}{}
\dbldown{400}{0}{}
\end{pspicture*}
\quad\to\quad
\begin{pspicture*}(-\noderadius,-\noderadius)(308,82)
\solid{0}{0}
\solid{100}{0}
\solid{200}{0}
\solid{300}{0}
\single{0}{0}{300}{0}
\end{pspicture*}
$$ We have drawn double-headed arrows because of the fusion of
tail classes.  Every arrow class has a representative with
tail at the leftmost vertex.  So the joins between these arrow classes
are the same as the joins between the right-hand tips
of the arrows.  So $\WOmega=W(A_{n-2})$.  

The second example is $D_{n\geq6}$, which has $n-1$ arrow classes:
$$
\raisebox{-.45\height}{%
\begin{pspicture*}(-100,-104)(408,104)
\solid{0}{0}
\solid{-50}{87}
\solid{-50}{-87}
\solid{100}{0}
\solid{200}{0}
\solid{300}{0}
\solid{400}{0}
\single{0}{0}{400}{0}
\single{0}{0}{-50}{87}
\single{0}{0}{-50}{-87}
\dbldown{100}{0}{$d$}
\dbldown{200}{0}{$e$}
\dbldown{300}{0}{$f$}
\Lthird{0}{0}{$a$}
\URthird{0}{0}{$b$}
\LRthird{0}{0}{$c$}
\end{pspicture*}
}%
\quad\to\quad
\raisebox{-.45\height}{%
\begin{pspicture}(-108,-104)(208,104)
\solid{0}{0}
\solid{-50}{87}
\solid{-50}{-87}
\solid{100}{0}
\solid{200}{0}
\single{0}{0}{200}{0}
\single{0}{0}{-50}{87}
\single{0}{0}{-50}{-87}
\rput[r](-62,87){$b$}
\rput[r](-62,-87){$c$}
\rput[r](-15,0){$d$}
\rput[b](100,14){$e$}
\rput[b](200,14){$f$}
\solid{-100}{0}
\rput[b](-100,14){$a$}
\end{pspicture}
}%
$$
Choosing representative arrows with tails at the top left shows that
$a$ is orthogonal to all the other generators except perhaps $c$.
Repeating the argument with tails at the lower left shows that $a$ is
also orthogonal to $c$.  Then taking tails at the rightmost vertex shows
that $b,\dots,f$ form a $D_{n-2}$ diagram.  So $\WOmega=W(A_1D_{n-2})$.
The  $D_4$ and $D_5$ cases are the same provided one interprets $D_2$
and $D_3$ as $A_1^2$ and $A_3$.   

The third example is the affine
diagram $\tildeD_{n\geq6}$, which 
gives $\WOmega=W(\tildeA_1\tildeD_{n-2})$ in a similar way:
$$
\raisebox{-.45\height}{%
\begin{pspicture}(-100,-104)(408,104)
\solid{0}{0}
\solid{-50}{87}
\solid{-50}{-87}
\solid{100}{0}
\solid{200}{0}
\solid{300}{0}
\solid{350}{87}
\solid{350}{-87}
\single{0}{0}{300}{0}
\single{0}{0}{-50}{87}
\single{0}{0}{-50}{-87}
\single{300}{0}{350}{87}
\single{300}{0}{350}{-87}
\dbldown{100}{0}{$d$}
\dbldown{200}{0}{$e$}
\Lthird{0}{0}{$a$}
\URthird{0}{0}{$b$}
\LRthird{0}{0}{$c$}
\Rthird{300}{0}{$h$}
\ULthird{300}{0}{$f$}
\LLthird{300}{0}{$g$}
\end{pspicture}
}%
\to\quad
\raisebox{-.45\height}{%
\begin{pspicture}(-208,-104)(158,104)
\solid{0}{0}
\solid{-50}{87}
\solid{-50}{-87}
\solid{100}{0}
\solid{150}{87}
\solid{150}{-87}
\single{0}{0}{100}{0}
\single{0}{0}{-50}{87}
\single{0}{0}{-50}{-87}
\single{100}{0}{150}{87}
\single{100}{0}{150}{-87}
\rput[r](-62,87){$b$}
\rput[r](-62,-87){$c$}
\rput[r](-15,0){$d$}
\rput[l](160,87){$f$}
\rput[l](161,-87){$g$}
\rput[l](115,0){$e$}
\solid{-200}{0}
\solid{-100}{0}
\single{-200}{0}{-100}{0}
\rput[t](-150,-8){$\infty$}
\rput[b](-200,14){$a$}
\rput[b](-100,14){$h$}
\end{pspicture}
}%
$$
The new phenomenon is that the arrows $a$ and $h$ cannot be moved into
a spherical $3$-vertex diagram.  
That is, their arrow classes contain no representatives lying in such
a diagram.  (In fact each arrow is its entire class.)
So those facets of
$\COmega$ don't meet, hence the edge label
$\infty$.  The $\tildeD_4$ and $\tildeD_5$ cases are the same,
provided one interprets $\tildeD_2$ and $\tildeD_3$ as $\tildeA_1^2$
and $\tildeA_3$.

These examples are enough to treat the general case:

\begin{theorem}
\label{thm-centralizer-diagram-tree-case}
If $\Delta$ is a tree of single edges, then the vertices of $\DOmega$ are the
$A_3$ subdiagrams of $\Delta$, with edge labels as follows.  If the
convex hull of two $A_3$'s in $\Delta$ has type $D$ (resp.\ $\tildeD$), then their edge label in $\DOmega$ is $2$ (resp. $\infty$).
Otherwise, it is the same as the one in $\Delta$ between their middle
vertices.  \qed
\end{theorem}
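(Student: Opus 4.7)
The plan is to apply theorem~\ref{thm-dihedral-angles} to the single-edge tree case, combining a bijection of arrow classes with $A_3$ subdiagrams with a case analysis of the possible convex-hull types. First I would establish the bijection. Since $\D$ is a tree, and hence simply connected, $\tildeDodd_s = \D$, so tiles correspond to vertices of $\D$ and arrows are pairs $[J, K]$ with $J, K$ non-adjacent. Iterating \eqref{eq-AoddC2B} reduces any arrow to a unique distance-two representative: when $d(J, K) > 2$, the next vertex on the $J$-to-$K$ path is a legal tail move, while when $d(J, K) = 2$ the only strictly closer vertex, the middle $L$ of the $A_3$ $J{-}L{-}K$, is barred as a neighbor of $K$. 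The swap \eqref{eq-A3C3B} then fuses $[J, K]$ with $[K, J]$ through this $L$, which is unique in a tree. Hence arrow classes correspond bijectively to $A_3$ subdiagrams.

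Next I would determine the labels. In the single-edge tree case the only table entry of theorem~\ref{thm-dihedral-angles} that imposes a label between distinct arrow classes is \eqref{eq-A2BnC}, applied to a common-tail pair $[T, X], [T, Y]$ where $T$ is isolated from $X, Y$. This gives label $m(X, Y)$, which is $3$ when $X$ and $Y$ are adjacent in $\D$ and (reading a missing edge as label $2$ in the Coxeter convention) is $2$ when they are not; if no such common tail exists the label is $\infty$. So the label between $\alpha$ and $\beta$ in $\DOmega$ is determined by whether their arrow classes share a common tail and, if so, by the adjacency in $\D$ of the resulting heads $X \in \{J_\alpha, K_\alpha\}$, $Y \in \{J_\beta, K_\beta\}$.

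I would then run a case analysis based on whether $L_\alpha = L_\beta$, and when not, on whether the shortest path from $L_\alpha$ to $L_\beta$ leaves each middle through one of that middle's $A_3$-endpoints. The five resulting configurations correspond precisely to the convex-hull types in the theorem: a shared middle with no shared endpoint yields a $\tildeD_4$ star and no common tail (label $\infty$); a shared middle with one shared endpoint yields a $D_4$, with the shared endpoint a common tail whose heads lie in distinct branches of the center and are therefore non-adjacent (label $2$); both $A_3$s pointing toward each other gives an $A$-type path hull, and the heads' adjacency in $\D$ matches $d(L_\alpha, L_\beta) = 1$; exactly one $A_3$ pointing at the other gives a $D$-type hull with its sole branching at the non-pointing middle, where any common tail gives heads in distinct branches of that branching vertex and hence label $2$; and neither pointing gives a $\tildeD_n$ dumbbell ($n \geq 5$) whose outside sets are disjoint, hence label $\infty$.

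The main obstacle is the ``only one $A_3$ points'' case: showing that every common tail $T$ forces the corresponding heads to sit in distinct subtrees hanging off the branching vertex, so that the label is uniformly $2$. This reduces to checking that the tail-set of $\alpha$ (the subtrees past $J_\alpha$ and past $K_\alpha$) meets that of $\beta$ only in portions lying on specific sides of the branching, which follows from tree geometry once the ``pointing'' condition is used to identify which endpoints face which other vertices.
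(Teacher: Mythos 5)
Your proposal is correct and follows essentially the same route as the paper, which deduces the theorem from the discussion preceding it (each tail class has a unique distance-two representative, these fuse in pairs to give the $A_3$ bijection, and all labels come from the common-tail rule \eqref{eq-A2BnC}) together with the $A_n$, $D_n$ and $\tildeD_n$ examples realizing the three hull types. Your explicit case analysis on whether each middle vertex is a branch point of the convex hull is just a systematic write-up of what the paper delegates to those examples.
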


In the special case of trivalent branch points, no two adjacent,
$\DOmega$ can be got from $\Delta$ by the following operation: ``blow up''
each branch point
\psset{unit=\Escaleunit}
\renewcommand{\noderadius}{\Escalenoderadius}
\renewcommand{\singlewidth}{\Escalesinglewidth}
$$
\raisebox{-.5\height}{%
\begin{pspicture}(-130,-150)(130,100)
\solid{-87}{50}
\solid{87}{50}
\solid{0}{-100}
\solid{0}{0}
\single{0}{0}{-87}{50}
\single{0}{0}{87}{50}
\single{0}{0}{0}{-100}
\dashed{-87}{50}{-130}{75}
\dashed{87}{50}{130}{75}
\dashed{0}{-100}{0}{-150}
\end{pspicture}
}%
\qquad\to\qquad
\raisebox{-.5\height}{%
\begin{pspicture}(-130,-150)(130,100)
\solid{-87}{50}
\solid{87}{50}
\solid{0}{-100}
\solid{-22}{-6}
\solid{22}{-6}
\solid{0}{25}
\single{0}{25}{-87}{50}
\single{0}{25}{87}{50}
\single{22}{-6}{87}{50}
\single{22}{-6}{0}{-100}
\single{-22}{-6}{0}{-100}
\single{-22}{-6}{-87}{50}
\dashed{-87}{50}{-130}{75}
\dashed{87}{50}{130}{75}
\dashed{0}{-100}{0}{-150}
\end{pspicture}
}%
$$
and then erase all the end vertices of $\Delta$, and finally add some edges
labeled $\infty$.  When there is only one branch point there are no
$\infty$'s.  For example, the $Y_{555}$ diagram gives
$$
\raisebox{-.67\height}{%
\begin{pspicture}(-435,-500)(435,250)
\solid{0}{0}
\solid{0}{-100}
\solid{0}{-200}
\solid{0}{-300}
\solid{0}{-400}
\solid{0}{-500}
\solid{-87}{50}
\solid{-174}{100}
\solid{-261}{150}
\solid{-348}{200}
\solid{-435}{250}
\solid{87}{50}
\solid{174}{100}
\solid{261}{150}
\solid{348}{200}
\solid{435}{250}
\single{0}{0}{-435}{250}
\single{0}{0}{435}{250}
\single{0}{0}{0}{-500}
\end{pspicture}
}%
\qquad\to\qquad
\raisebox{-.67\height}{%
\begin{pspicture}(-335,-500)(335,250)
\solid{0}{-100}
\solid{0}{-200}
\solid{0}{-300}
\solid{0}{-400}
\solid{-87}{50}
\solid{-174}{100}
\solid{-261}{150}
\solid{-348}{200}
\solid{87}{50}
\solid{174}{100}
\solid{261}{150}
\solid{348}{200}
\single{-87}{50}{-348}{200}
\single{87}{50}{348}{200}
\single{0}{-100}{0}{-400}
\solid{0}{100}
\single{0}{100}{87}{50}
\single{0}{100}{-87}{50}
\solid{-87}{-50}
\single{-87}{-50}{-87}{50}
\single{-87}{-50}{0}{-100}
\solid{87}{-50}
\single{87}{-50}{87}{50}
\single{87}{-50}{0}{-100}
\end{pspicture}
}%
$$

We chose this example because it explains the appearance of the latter
figure in the ATLAS \cite{ATLAS} entry for the monster simple group
$M$, given the appearance of the former.  Namely, the bimonster
$(M\times M){:}2$ is described as a quotient of the $Y_{555}$
Coxeter group, and $M\times2$ as a quotient of the Coxeter group of
the second figure.  Given the first, one should expect the second,
because a $Y_{555}$ reflection maps to an involution in the bimonster
with centralizer $M\times2$.  (One can repeat the process, so the
reflection centralizer in the second diagram maps to the involution
centralizer $2B\times2\sset M\times2$ where $B$ is the baby monster.
Since the nonreflection part is now $\Z$, the $Y_{555}$ approach to
$M$  distinguishes a conjugacy class in $B$, up to inversion.  I
don't know what class this is or whether this approach offers any real
insight.)

Another example is $\Delta=E_8$, which we show in several steps to
illustrate the interaction between blowing up branch points and
erasing ends:
\begin{align*}
\raisebox{-.59\height}{%
\begin{pspicture}(-808,-108)(-192,38)
\solid{-200}{0}
\solid{-300}{0}
\solid{-400}{0}
\solid{-500}{0}
\solid{-600}{0}
\solid{-700}{0}
\solid{-800}{0}
\solid{-600}{-100}
\single{-200}{0}{-800}{0}
\single{-600}{0}{-600}{-100}
\end{pspicture}
}%
\quad&\to\quad
\raisebox{-.59\height}{%
\begin{pspicture}(-808,-108)(-192,38)
\solid{-200}{0}
\solid{-300}{0}
\solid{-400}{0}
\solid{-500}{0}
\solid{-700}{0}
\solid{-800}{0}
\solid{-600}{-100}
\single{-200}{0}{-500}{0}
\single{-700}{0}{-800}{0}
\solid{-600}{30}
\single{-600}{30}{-500}{0}
\single{-600}{30}{-700}{0}
\solid{-630}{-30}
\single{-630}{-30}{-700}{0}
\single{-630}{-30}{-600}{-100}
\solid{-570}{-30}
\single{-570}{-30}{-600}{-100}
\single{-570}{-30}{-500}{0}
\end{pspicture}
}%
\\
\noalign{\vskip5pt}
\to\quad
\raisebox{-.59\height}{%
\begin{pspicture}(-808,-108)(-192,38)
\psset{linecolor=lightgray}
\solid{-200}{0}
\single{-200}{0}{-300}{0}
\solid{-800}{0}
\single{-800}{0}{-700}{0}
\solid{-600}{-100}
\single{-630}{-30}{-600}{-100}
\single{-570}{-30}{-600}{-100}
\psset{linecolor=black}
\solid{-300}{0}
\solid{-400}{0}
\solid{-500}{0}
\solid{-700}{0}
\single{-300}{0}{-500}{0}
\solid{-600}{30}
\single{-600}{30}{-500}{0}
\single{-600}{30}{-700}{0}
\solid{-630}{-30}
\single{-630}{-30}{-700}{0}
\solid{-570}{-30}
\single{-570}{-30}{-500}{0}
\end{pspicture}
}%
\quad&\to\quad
\raisebox{-.59\height}{%
\begin{pspicture}(-808,-108)(-292,38)
\solid{-300}{0}
\solid{-400}{0}
\solid{-500}{0}
\solid{-700}{0}
\single{-300}{0}{-500}{0}
\solid{-600}{30}
\single{-600}{30}{-500}{0}
\single{-600}{30}{-700}{0}
\solid{-630}{-30}
\single{-630}{-30}{-700}{0}
\solid{-570}{-30}
\single{-570}{-30}{-500}{0}
\end{pspicture}
}%
\quad=\quad E_7
\end{align*}
The first step blows up the branch point, the second shows what will
be erased, and the third  actually erases it.  

An example
with an edge label $\infty$ is the reflection group
of the even unimodular lattice of signature $(17,1)$.  By
\cite{Vinberg-units} (see also
\cite{Conway}), 
$\Delta$ is 
$$
\raisebox{-.59\height}{%
\begin{pspicture}(-808,-108)(808,38)
\solid{-100}{0}
\solid{-200}{0}
\solid{-300}{0}
\solid{-400}{0}
\solid{-500}{0}
\solid{-600}{0}
\solid{-700}{0}
\solid{-800}{0}
\solid{-600}{-100}
\single{-100}{0}{-800}{0}
\single{-600}{0}{-600}{-100}
\solid{100}{0}
\solid{200}{0}
\solid{300}{0}
\solid{400}{0}
\solid{500}{0}
\solid{600}{0}
\solid{700}{0}
\solid{800}{0}
\solid{600}{-100}
\single{100}{0}{800}{0}
\single{600}{0}{600}{-100}
\solid{0}{30}
\single{0}{30}{100}{0}
\single{0}{30}{-100}{0}
\end{pspicture}
}%
$$
which after explosion and erasure yields
$$
\raisebox{-.59\height}{%
\begin{pspicture}(-808,-108)(808,38)
\solid{-100}{0}
\solid{-200}{0}
\solid{-300}{0}
\solid{-400}{0}
\solid{-500}{0}
\solid{-700}{0}
\single{-100}{0}{-500}{0}
\solid{-600}{30}
\single{-600}{30}{-500}{0}
\single{-600}{30}{-700}{0}
\solid{-630}{-30}
\single{-630}{-30}{-700}{0}
\solid{-570}{-30}
\single{-570}{-30}{-500}{0}
\rput[tr](-635,-50){!}
\solid{100}{0}
\solid{200}{0}
\solid{300}{0}
\solid{400}{0}
\solid{500}{0}
\solid{700}{0}
\single{100}{0}{500}{0}
\solid{600}{30}
\single{600}{30}{500}{0}
\single{600}{30}{700}{0}
\solid{630}{-30}
\single{630}{-30}{700}{0}
\solid{570}{-30}
\single{570}{-30}{500}{0}
\rput[tl](635,-50){!}
\solid{0}{30}
\single{0}{30}{100}{0}
\single{0}{30}{-100}{0}
\end{pspicture}
}%
$$ Since the vertices marked ``!''\ correspond to $A_3$'s in $\Delta$ whose
convex hull is a $\tildeD_{16}$, they should be joined by an edge
labeled $\infty$.  Adjoining this edge completes the description of
$\DOmega$.  Remarks: (1) this is the reflection group of the even
lattice of signature $(16,1)$ and determinant~$-2$.  (2) Because it acts on
hyperbolic space $H^{16}$, it makes sense to ask whether this $\infty$
represents parallelism or ultraparallelism.  It represents
parallelism, because the corresponding infinite dihedral group lies in
the affine group $W(\tildeD_{16})$.

As a meatier example we treat a Coxeter group found by
Bugaenko \cite{Bugaenko}.  It acts cocompactly on $H^8$, and is the only
known cocompact example on any $H^{n\geq8}$.   Here $\Delta$ is
\psset{unit=\bigscaleunit}
\renewcommand{\noderadius}{\bigscalenoderadius}
\renewcommand{\singlewidth}{\bigscalesinglewidth}
$$
\begin{pspicture*}(-8,-108)(808,39)
\solid{0}{0}
\solid{100}{0}
\solid{200}{0}
\solid{300}{0}
\solid{400}{0}
\solid{500}{0}
\solid{600}{0}
\solid{700}{0}
\solid{800}{0}
\solid{300}{-100}
\solid{500}{-100}
\single{0}{0}{800}{0}
\single{300}{0}{300}{-100}
\single{500}{0}{500}{-100}
\dashed{300}{-100}{500}{-100}
\rput[b](50,10){$5$}
\rput[b](750,10){$5$}
\end{pspicture*}
$$
where the dashed line means an edge label~$\infty$.
The same argument as before shows that every tail class is
represented by a unique arrow from one vertex to another at
distance~$2$.  So we can name the $13$ generators of
$\WOmega$:
$$
\begin{pspicture}(-8,-108)(808,82)
\solid{0}{0}
\solid{100}{0}
\solid{200}{0}
\solid{300}{0}
\solid{400}{0}
\solid{500}{0}
\solid{600}{0}
\solid{700}{0}
\solid{800}{0}
\solid{300}{-100}
\solid{500}{-100}
\single{0}{0}{800}{0}
\single{300}{0}{300}{-100}
\single{500}{0}{500}{-100}
\dashed{300}{-100}{500}{-100}
\cwSE{100}{0}{$f$}
\ccwSW{700}{0}{$f'$}
\cwNW{100}{0}{$g$}
\ccwNE{700}{0}{$g'$}
\dbldown{200}{0}{$e$}
\dbldown{600}{0}{$e'$}
\dbldown{300}{0}{$d$}
\dbldown{500}{0}{$d'$}
\NWSE{200}{0}{$c$}
\SWNE{500}{-100}{$c'$}
\SWNE{300}{-100}{$b$}
\NWSE{400}{0}{$b'$}
\dbldown{400}{0}{$a$}
\rput[b](70,10){$5$}
\rput[b](730,10){$5$}
\end{pspicture}
$$
Note that $f$ and $g$ represent distinct arrow classes, because
only ``$A_3$ arrows'' are reversible.   Taking tails at
the left end of $\Delta$ shows that $f,e,d,c,a,b',d',e',g'$ are joined the
same way as their right endpoints are joined in $\Delta$.  Exchanging
primed and unprimed letters gives all joins among
$f',e',d',c',a,b,d,e,g$, so we know all the joins except those between
a member of $\{g,b,c',f'\}$ and a member of $\{g',b',c,f\}$.  By the
priming symmetry there are only 10 cases left to work out.  Taking
tails based at the middle vertex gives $b b'=\infty$ 
and $g g'=2$.  Taking tails based at the lower left vertex gives
$b c=b g'=c g=2$.  We have $f\!g=2$ by \eqref{eq-A5C3B}.
Finally, we have $b f=c c'=c f'=f\!f'=\infty$ because in none of these cases is there a
$3$-vertex spherical diagram containing representatives for both arrow
classes.  So $\DOmega$ is
$$
\newcommand{\aX}{0}  
\newcommand{\aY}{100}
\newcommand{\bX}{-50} 
\newcommand{\bY}{50}
\newcommand{\fX}{-100} 
\newcommand{\fY}{0}
\newcommand{\cX}{\fX} 
\newcommand{\cY}{-100}
\newcommand{\dX}{\fX} 
\newcommand{\dY}{\aY}
\newcommand{\eX}{-200} 
\newcommand{\eY}{\fY}
\newcommand{\gX}{-300} 
\newcommand{\gY}{\fY}
\newcommand{\fiveX}{-250} 
\newcommand{\fiveY}{-10}
\newcommand{\raisedtext}[3]{\rput[b](#1,#2){{\raise4pt\hbox{#3}}}}
\begin{pspicture}(\gX,\cY)(-\gX,140)
\solid{\aX}{\aY}
\single{\aX}{\aY}{\bX}{\bY}
\single{\aX}{\aY}{\dX}{\dY}
\single{\aX}{\aY}{-\bX}{\bY}
\single{\aX}{\aY}{-\dX}{\dY}
\raisedtext{\aX}{\aY}{$a$}
\solid{\bX}{\bY}
\solid{\cX}{\cY}
\solid{\dX}{\dY}
\solid{\eX}{\eY}
\solid{\fX}{\fY}
\solid{\gX}{\gY}
\raisedtext{\bX}{\bY}{$b\,$}
\raisedtext{\cX}{\cY}{$c$}
\raisedtext{\dX}{\dY}{$d$}
\raisedtext{\eX}{\eY}{$e$}
\raisedtext{\fX}{\fY}{$f\,\,$}
\raisedtext{\gX}{\gY}{$g\,\,\,$}
\single{\dX}{\dY}{\eX}{\eY}
\single{\fX}{\fY}{\eX}{\eY}
\single{\cX}{\cY}{\eX}{\eY}
\single{\gX}{\gY}{\eX}{\eY}
\rput[t](\fiveX,\fiveY){$5$}
\dashed{\bX}{\bY}{\fX}{\fY}
\solid{-\bX}{\bY}
\solid{-\cX}{\cY}
\solid{-\dX}{\dY}
\solid{-\eX}{\eY}
\solid{-\fX}{\fY}
\solid{-\gX}{\gY}
\raisedtext{-\bX}{\bY}{$\,\,b'$}
\raisedtext{-\cX}{\cY}{$c'$}
\raisedtext{-\dX}{\dY}{$\,\,d'$}
\raisedtext{-\eX}{\eY}{$\,\,e'$}
\raisedtext{-\fX}{\fY}{$\,\,\,\,\,\,\,f'$}
\raisedtext{-\gX}{\gY}{$\,\,\,\,\,\,\,g'$}
\single{-\dX}{\dY}{-\eX}{\eY}
\single{-\fX}{\fY}{-\eX}{\eY}
\single{-\cX}{\cY}{-\eX}{\eY}
\single{-\gX}{\gY}{-\eX}{\eY}
\rput[t](-\fiveX,\fiveY){$5$}
\dashed{-\bX}{\bY}{-\fX}{\fY}
\dashed{\bX}{\bY}{-\bX}{\bY}
\dashed{\bX}{\bY}{-\cX}{\cY}
\dashed{-\bX}{\bY}{\cX}{\cY}
\dashed{\cX}{\cY}{-\cX}{\cY}
\dashed{\fX}{\fY}{-\fX}{\fY}
\dashed{\cX}{\cY}{-\fX}{\fY}
\dashed{-\cX}{\cY}{\fX}{\fY}
\end{pspicture}
$$

We are grateful to one of the referees for suggesting the following
unpublished example of Howlett: the reflection centralizer when $\Delta$
is a $4$-cycle of edges labeled~$5$.  It turns out that $\WOmega$ has
countably many generators but no relations at all.
One can see this
as follows.  Here $\Dodd$ is a cycle, so $\tildeDodd$ is an infinite
chain.  An arrow has its tail at a point of this chain and its tip
in $\Delta$, evenly joined to the projection of the tail.  There is only
candidate for the tip, so there is one arrow for each point of the
chain.  None of the diagrams from table~\ref{tab-of-dihedral-angles} appear in $\Delta$, so
there is no fusion into arrow classes and there are no relations between the
arrows.

An interesting twist on this example is when $\Delta$ is a pentagon with
edges labeled~$5$.  For each pair of adjacent points of the chain
$\tildeDodd$ there is a unique point of $\Delta$ to which they both have
arrows.  These pairs are the tail classes, which are also the arrow
classes because there are no $A_3$ diagrams.  Given three consecutive
points of $\tildeDodd$, the first two give an arrow class and so do
the last two.   These arrow classes have representatives with the same tail (the
middle point), so they are joined in $\DOmega$ the same way their tips
are joined in $\Delta$, namely by an edge labeled~$5$.  
The end result is that
$\DOmega$ is an infinite chain with adjacent (resp.\ nonadjacent) nodes joined by edges
labeled~$5$ (resp.\ $\infty$).
Pursuing this to its logical
conclusion yields the following:

\begin{theorem}
Suppose $\Delta$ is a cycle of $n>3$ odd edges, with nodes labeled
$s_0,\dots,\discretionary{}{}{}s_{n-1}$ in cyclic order and subscripts read modulo~$n$.
Then
$\WOmega$ has generators $S_{i\in\Z}$ and relations

(i) $(S_i S_j)^{o(s_is_j)}=1$ if $|i-j|\leq n-4$

(ii) $(S_i S_{i+2-n})^2=1$ if $\{s_i,s_{i+1},s_{i+2}\}$ has type $H_3$

(iii) $S_i=S_{i+2-n}$ if $\{s_i,s_{i+1},s_{i+2}\}$ has type $A_3$

\noindent
A generator for $\GOmega$ acts by $S_i\mapsto S_{i+n}$.
\end{theorem}

\begin{remarks}
(1)
If $n\leq 3$ then $\WOmega$ is trivial.  (2) This is not quite a
Coxeter presentation since the third set of relations may identify
generators with each other.  After this identification one does obtain
a Coxeter presentation.  (3)  The affine diagram
$\tildeA_{n-1}$ is a cycle of $n$ edges labeled~$3$.  In this case the third set of
relations read $S_i=S_{i+2-n}$ for all $i$, so the centralizer has
$n-2$  generators.  The first set of relations shows that $\WOmega$
is the affine group $\tildeA_{n-3}$.   A generator for $\GOmega$
acts on this $\tildeA_{n-3}$ diagram by rotation by  two notches.
\end{remarks}

\begin{proof}[Proof sketch]
Call $\tildeDodd$'s nodes $t_{i\in\Z}$, where $t_i$ projects to $s_i$.
An arrow has the form $[t_i,s_j]$ where $i\not\cong j,j\pm1$
mod~$n$.  The tail classes are 
$$
S_i=\bigl\{[t_{i+2},s_i],[t_{i+3},s_i],\dots,[t_{i+n-2},s_i]\bigr\}
$$ 
for $i\in\Z$.
If $|i-i'|\leq n-4$ then $S_i$ and $S_{i'}$ contain arrows with the
same tail, hence the relation $(S_i S_{i'})^{o(s_i s_{i'})}=1$.  

If
$\{s_j,s_{j+1},s_{j+2}\}$ has type $H_3$, then 
\eqref{eq-A5C3B} says
that
$[t_{i+2},s_i]$ 
and
$[t_i,s_{i+2}]$ 
commute, whenever $i\in\Z$ reduces to $j$ modulo $n$.
Since $[t_{i+2},s_i]\in S_i$ and $[t_i,s_{i+2}]\in S_{i+2-n}$, this
gives
$(S_i S_{i+2-n})^2=1$.  
If
$\{s_j,s_{j+1},s_{j+2}\}$ has type $A_3$ then 
the same argument using \eqref{eq-A3C3B} gives $S_i=S_{i+2-n}$ instead.
The cases \eqref{eq-A4B3C}, \eqref{eq-AevenB2C} and \eqref{eq-B4A3C}
from table~\ref{tab-of-dihedral-angles} are irrelevant.
\end{proof}

As a final example we mention Vinberg's diagrams
\cite[p.~34]{Vinberg-units} for the reflection groups of the odd
unimodular Lorentzian lattices.  If $s$ is a node of one of these
diagrams corresponding to a norm~$1$ root then $\WOmega$ is the
reflection group of its orthogonal complement, which is unimodular
Lorentzian of one dimension less.  One possibility for $s$ (usually
the only one) leads to $\WOmega$ being the previous entry in Vinberg's
table.  By taking centralizers of nodes corresponding to norm~$2$
roots one could instead rederive the
diagrams for the reflection groups of the odd bimodular Lorentzian
lattices 
\cite[p.~32]{Vinberg-units}.

\end{document}